\theoremstyle{plain}% Theorem-like structures
\newtheorem{theorem}{Theorem}[section]
\newtheorem{corollary}[theorem]{Corollary}
\newtheorem{lemma}[theorem]{Lemma}
\theoremstyle{definition}
\newtheorem{definition}{Definition}
\theoremstyle{remark}
\newcommand{\la}{\langle}
\newcommand{\ra}{\rangle}
\newcommand{\argmin}{\operatornamewithlimits{argmin}}
\begin{document}

\title{Universal Intermediate Gradient Method for Convex Problems with Inexact Oracle}
 
\author{
Dmitry Kamzolov \textsuperscript{a}
Pavel Dvurechensky\textsuperscript{b} and Alexander Gasnikov\textsuperscript{a,c}.\\
\textsuperscript{a}Moscow Institute of Physics and Technology, Moscow, Russia;\\
\textsuperscript{b} Weierstrass Institute for Applied Analysis and Stochastics, Berlin, Germany;\\ 
\textsuperscript{c}Institute for Information Transmission Problems RAS, Moscow, Russia.
}

\maketitle

\begin{abstract}
In this paper, we propose new first-order methods for minimization of a convex function on a simple convex set. We assume that the objective function is a composite function given as a sum of a simple convex function and a convex function with inexact H\"older-continuous subgradient. We propose Universal Intermediate Gradient Method. Our method enjoys both the universality and intermediateness properties. Following the ideas of Y. Nesterov (Math.Program. 152: 381-404, 2015) on Universal Gradient Methods, our method does not require any information about the H\"older parameter and constant and adjusts itself automatically to the local level of smoothness. On the other hand, in the spirit of the Intermediate Gradient Method proposed by O. Devolder, F.Glineur and Y. Nesterov (CORE Discussion Paper 2013/17, 2013), our method is intermediate in the sense that it interpolates between Universal Gradient Method and Universal Fast Gradient Method. This allows to balance the rate of convergence of the method and rate of the oracle error accumulation. Under additional assumption of strong convexity of the objective, we show how the restart technique can be used to obtain an algorithm with faster rate of convergence.
\end{abstract}

\section{Introduction}

In this paper, we consider first-order methods for minimization of a convex function over a simple convex set. The renaissance of such methods started more than ten years ago and was mostly motivated by large-scale problems in data analysis, imaging and machine learning. Simple black-box oriented methods like Mirror Descent \cite{nemirovsky1983problem} or Fast Gradient Method \cite{nesterov1983method}, which were known in the 1980s, got a new life. 

For a long time algorithms and their analysis were, mostly, separate for two main classes of problems. The first class, with optimal method being Mirror Descent, is the class of non-smooth convex functions with bounded subgradients. The second is the class of smooth convex functions with Lipschitz-continuous gradient, and the optimal method for this class is Fast Gradient Method. An intermediate class of problems with H\"older-continuous subgradient was also considered and optimal methods for this class were proposed in \cite{nemirovskii1985optimal}. However, these methods require to know the H\"older constant. In 2013, Nesterov proposed a Universal Fast Gradient Method \cite{nesterov2015universal} which is free of this drawback and is uniformly optimal for the class of convex problems with H\"older-continuous subgradient in terms of black-box information theoretic lower bounds \cite{nemirovsky1983problem}. In 2012, Lan proposed a Fast gradient method with one prox-mapping for stochastic optimization problems \cite{lan2012optimal}. In 2016, Gasnikov and Nesterov proposed a Universal Triangle Method \cite{gasnikov2016universal}, which possesses all the properties of Universal Fast Gradient Method, but uses only one proximal mapping instead of two, as opposed to the previous version. We also mention the work \cite{lan2015generalized}, where the authors introduce a method which is uniformly optimal for convex and non-convex problems with H\"older-continuous subgradient, and the work \cite{yurtsever2015universal}, in which a universal primal-dual method is proposed to solve linearly constrained convex problems.

Another line of research \cite{d2008smooth,devolder2013intermediate,devolder2014first, dvurechensky2016stochastic,gasnikov2015gradient} studies first-order methods with inexact oracle. The considered inexactness can be of deterministic or stochastic nature, it can be connected to inexact calculation of the subgradient or to inexact solution of some auxiliary problem. As it was shown in \cite{devolder2014first}, gradient descent has slower rate of convergence, but does not accumulate the error of the oracle. On the opposite, Fast Gradient Method has faster convergence rate, but accumulates the error linearly with the iteration counter. Later, in \cite{devolder2013intermediate} an Intermediate Gradient Method was proposed. The main feature of this method is that, depending on the choice of a hyperparameter, it interpolates between Gradient Method and Fast Gradient Method to exploit the trade-off between the rate of convergence and the rate of error accumulation.

In this paper, we join the above two lines of research and present Universal Intermediate Gradient Method (UIGM) for problems with deterministic inexact oracle. Our method enjoys both the universality with respect to smoothness of the problem and interpolates between Universal Gradient Method and Universal Fast Gradient Method, thus, allowing to balance the rate of convergence of the method and rate of the error accumulation. We consider a composite convex optimization problem on a simple set with convex objective, which has inexact H\"older-continuous subgradient, propose a method to solve it, and prove the theorem on its convergence rate. The obtained rate of convergence is uniformly optimal for the considered class of problems. This method can be used in different applications such as transport modeling \cite{baimurzina2017universal,gasnikov2015universal}, inverse problems \cite{gasnikov2017convex} and others. 

We also consider the same problem under additional assumption of strong convexity of the objective function and show how the restart technique \cite{nemirovsky1983problem,nemirovskii1985optimal,nesterov1983method,juditsky2014deterministic,kamzolov2016universal,roulet2017sharpness,fercoq2017adaptive} can be applied to obtain a faster convergence rate of UIGM.
The obtained rate of convergence is again optimal for the class of strongly convex functions with H\"older-continuous subgradient.

The rest of the paper is organized as follows. 
In Sect. 2, we state the problem. After that, in Sect. 3, we present Universal Intermediate Gradient Method and prove a convergence rate theorem with general choice of controlling sequence of coefficients. In Sect. 4, we analyze particular choice of controlling sequence of coefficients and prove a convergence rate theorem under this choice of coefficients. In Sect. 5, we present UIGM for strongly convex functions and prove convergence rate theorem under this additional assumption. In Sect. 6, we introduce another choice of coefficients that don't need any additional information. In Sect. 7, we present numerical experiments for our method.

\section{Problem Statement and Preliminaries}
In what follows, we work in a finite-dimensional linear vector space $E$. Its dual space, the space of all linear functions on $E$, is denoted by $E^{\ast}$. Relative interior of $Q$ is denoted as rint $Q$. For $x\in E$ and $s\in E^{\ast}$, we denote by $\left\langle s,x \right\rangle$ the value of a linear function $s$ at $x$. For the (primal) space $E$, we introduce a norm $\|\,\cdot\,\|_E$. Then the dual norm is defined in the standard way:
$$
\|s\|_{E,\ast}=\max\limits_{x\in E} \left\lbrace \left\langle s,x \right\rangle: \, \|x\|_E\leq 1 \right\rbrace.
$$
Finally, for a convex function $f: \textbf{dom } f \rightarrow R$ with $\textbf{dom } f \subseteq E$ we denote by $\nabla f(x)\in E^{\ast}$ one of its subgradients.

We consider the following convex composite optimization problem \cite{nesterov2013gradient}:
\begin{equation}
\min\limits_{x\in Q} \left[ F(x) \overset{\text{def}}{=}f(x)+h(x) \right],
\label{eq_pr}
\end{equation}
where $Q$ is a simple closed convex set, $h(x)$ is a simple closed convex function and $f(x)$ is a convex function on $Q$ with inexact first-order oracle, defined below. We assume that problem \eqref{eq_pr} is solvable with optimal solution $x^{\ast}$.

\begin{definition} 
We say that a convex function $f(x)$ is equipped with a \textit{first-order $\left(\delta,L  \right)$--oracle} on a convex set $Q$ if for any point $x \in Q$, $(\delta,L)$-oracle returns a pair $(f_{\delta}(x),\, g_{\delta}(x))\in R \times E^{\ast}$ such that
\begin{equation}
 0\leq f(y) - f_{\delta}(x) - \left\langle g_{\delta}(x), y-x\right\rangle \leq \frac{L}{2}\|y-x\|_E^2+\delta, \quad \forall y \in Q.
 \label{eq_or}
\end{equation}
\label{def_inex}
\end{definition}
In this definition, $\delta$ represents the error of the oracle  \cite{devolder2014first}. The oracle is exact with $\delta=0$. Also we can take $\delta=\delta_c+\delta_u$, where $\delta_c$ represents the error, which we can control and make as small as we would like to. On the opposite, $\delta_u$ represents the error, which we can not control \cite{dvurechensky2016stochastic}. 
Note that, by Definition \ref{def_inex}, 
\begin{equation}
\label{eq_fd}
0\leq f(x) - f_{\delta}(x)  \leq \delta, \quad \forall x \in Q.
\end{equation}

To motivate Definition \ref{def_inex}, we consider the following example. Let $f$ be a convex function with H\"older-continuous subgradient. Namely, there exists $\nu \in [0,1]$, and $M_{\nu}<+\infty$, such that
\begin{equation*}
\| \nabla f(x) - \nabla f(y) \|_{E^\ast} \leq M_{\nu}\|x-y\|_E^{\nu}, \quad \forall x,y \in Q.
\end{equation*}
In \cite{devolder2014first}, it was proved that, for such function for any $\delta_c>0$, if
\begin{equation}
L  \geq L(\delta_c) = \left[ \frac{1-\nu}{1+\nu}\cdot \frac{1}{2\delta_c}\right]^{\frac{1-\nu}{1+\nu}} M_{\nu}^{\frac{2}{1+\nu}}, 
\label{eq_ho}
\end{equation}
then
\begin{equation} 
\label{eq_hotoLip}
f(y) \leq f(x) + \left\langle \nabla f(x), y-x\right\rangle + \frac{L}{2}\|x-y\|_{E}^2+\delta_c, \quad \forall x,y \in Q .
\end{equation}
We assume also that the set $Q$ is bounded with $\max_{x,y \in Q} \|x-y\|_{E} \leq D$. Finally, assume that the value and subgradient of $f$ can be calculated only with some known, but uncontrolled error. Strictly speaking, there exist $\bar{\delta}_1, \bar{\delta}_2 > 0$  such that, for any point $x \in Q$, we can calculate approximations $\bar{f}(x)$ and $\bar{g}(x)$ with $|\bar{f}(x) - f(x)| \leq \bar{\delta}_1$ and $\|\bar{g}(x)-\nabla f(x)\|_{E^{\ast}} \leq \bar{\delta}_2$.

Let us show that, in this example, $f$ can be equipped with inexact first-order oracle based on the pair $(\bar{f}(x),\,\bar{g}(x))$, where $f_{\delta}(x)=\bar{f}(x)-\bar{\delta}_1-\bar{\delta}_2 D$ and $g_{\delta}(x)=\bar{g}(x)$.

Now we prove the first inequality from \eqref{eq_or}
\begin{align}
    f(y) &\geq f(x) + \left\langle \nabla f(x), y-x\right\rangle\\
         &\geq \bar{f}(x) - \bar{\delta}_1 + \left\langle \bar{g}(x), y-x\right\rangle- \bar{\delta}_2 D = f_{\delta}(x) + \left\langle g_{\delta}(x), y-x\right\rangle
\end{align}

Using inequality \eqref{eq_hotoLip} we obtain the second inequality from \eqref{eq_or}, for any $y \in Q$, 
\begin{align}
f(y) &\leq f(x) + \la \nabla f(x), y-x \ra + \frac{L(\delta_c)}{2}\|x-y\|_{E}^2 +\delta_c \notag \\
     &\leq \bar{f}(x) + \bar{\delta}_1 + \la \bar{g}(x), y-x \ra + \la \nabla f(x) - \bar{g}(x), y-x \ra + \frac{L(\delta_c)}{2}\|x-y\|_{E}^2 + \delta_c \notag \\
	   &\leq f_{\delta}(x)  + \la g_{\delta}(x), y-x \ra + \frac{L(\delta_c)}{2}\|x-y\|_{E}^2 + 2\bar{\delta}_1+ 2\bar{\delta}_2 D + \delta_c. \notag
\end{align}
Thus,  $(f_{\delta}(x),\,g_{\delta}(x))$ is an inexact first-order oracle with $\delta_u =  2\bar{\delta}_1+ 2\bar{\delta}_2D$,  $\delta_c$, and $L(\delta_c)$ given by \eqref{eq_ho}.

To construct our algorithm for problem \eqref{eq_pr}, we introduce, as it is usually done, \text{proximal setup} \cite{ben-tal2015lectures}, which consists of choosing a norm $\|\cdot\|_E$, and a {\it prox-function} $d(x)$ which is continuous, convex on $Q$ and
\begin{enumerate}
	\item $d(x)$ is  a continuously differentiable $1$-strongly convex on $Q$ with respect to $\|\cdot\|_E$, i.e., for any $x,y \in \text{rint } Q$, 
  \begin{equation}
d(y)-d(x) -\langle \nabla d(x) ,y-x \rangle \geq \frac12\|y-x\|_E^2.
\label{eq_d}
\end{equation}  
	\item Without loss of generality, we assume that 
    \begin{equation}
    \label{eq_mind}
    \min\limits_{x\in  Q} d(x) = 0.
    \end{equation}
 Then if $\bar{x}=\argmin\limits_{x\in  Q} d(x)$, we get 
\begin{equation}
\label{eq_dcenter}
    d(y)\geq \frac12\|y-\bar{x}\|_E^2, \quad \forall y \in Q.
\end{equation}
\end{enumerate}

The corresponding \textit{Bregman divergence} is defined as $V(x,y)=d(y)-d(x)-\left\langle \nabla d(x), y-x \right\rangle$ and satisfies
\begin{equation}
\label{eq_breg}
V(x,y)\geq \frac{1}{2} \|x-y\|_E^2, \quad \forall x,y \in Q.  
\end{equation}
We use prox-function in so called {\it composite prox-mapping}, which consists in solving auxiliary problem
\begin{equation}
\min_{x \in Q} \left\{\la g,x \ra + d(x) +h(x) 		 \right\} ,
\label{eq_PrMap}
\end{equation}
where $g \in E^{\ast}$ is given. We allow this problem to be solved inexactly in the following sense.
\begin{definition}
Assume that $\delta_{p} >0$, $g \in E^{\ast}$ are given.
We call a point $\tilde{x} = \tilde{x}(g,\delta_{p}) \in \text{rint }  Q$ an {\it inexact composite prox-mapping} iff we can calculate $\tilde{x}$ and there exists $p \in \partial h(\tilde{x})$ s.t. it holds that
\begin{equation}
\left\la g + \nabla d(\tilde{x}) + p, u - \tilde{x} \right\ra \geq - \delta_{p}, \quad \forall u \in  Q.
\label{eq_InPrMap}
\end{equation}
 We denote by
\begin{equation}
\tilde{x} =  \argmin\limits_{x\in  Q} \,^{\delta_{p}}\left\{\la g,x \ra + d(x) +h(x) 		 \right\}.
\label{eq_InPrMap1}
\end{equation}
 one of the possible inexact composite prox-mapping. 
\end{definition}

 Note that if $\tilde{x}$ is an exact solution of \eqref{eq_PrMap}, inequality \eqref{eq_InPrMap} holds with $\delta_{p}=0$ due to first-order optimality condition. 

We also use the following auxiliary fact

\begin{lemma} (Lemma 5.5.1 in \cite{ben-tal2015lectures}) 
Let $F \, : Q \rightarrow \mathbb{R} \bigcup  \{+\infty\}$ be a convex function such that $\Psi(x)=F(x) + d(x)$ is closed and convex on $Q$. Denote $\tilde{x} =\argmin\limits_{x\in Q}\,^{\delta_{p}} \Psi(x).$ Then 
\begin{equation}
    \Psi(y)\geq \Psi(\tilde{x})+ V(\tilde{x},y)-\delta_{p}, \quad \forall y \in Q.
\label{eq_l2}
\end{equation}
Hence, from \eqref{eq_breg}
\begin{equation}
    \Psi(y)\geq \Psi(\tilde{x})-\delta_{p}, \quad \forall y \in Q.
\label{eq_l2b}
\end{equation}
\label{lem2}
\end{lemma}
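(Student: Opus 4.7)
The plan is to mimic the exact-minimizer proof of this identity and absorb the approximation error directly through the inexact optimality condition. First I would extend the notion of inexact minimization in \eqref{eq_InPrMap} in the natural way to the general function $\Psi = F + d$: the point $\tilde{x} \in Q^0$ is a $(\delta_{pc}+\delta_{pu})$-approximate minimizer of $\Psi$ precisely when there exists a subgradient $q \in \partial F(\tilde{x})$ such that
\begin{equation}
\left\langle q + \nabla d(\tilde{x}),\, u - \tilde{x} \right\rangle \geq -\delta_{pc}-\delta_{pu}, \quad \forall u \in Q.
\label{eq_plan_opt}
\end{equation}
This is just the first-order optimality condition for the problem $\min_{x\in Q}\Psi(x)$, relaxed by $\delta_{pc}+\delta_{pu}$, and it matches the special case \eqref{eq_InPrMap} when $F(x) = \langle g, x\rangle + h(x)$.

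Next I would combine two standard inequalities at the point $\tilde{x}$. Convexity of $F$ gives, for every $y \in Q$,
\begin{equation}
F(y) \geq F(\tilde{x}) + \left\langle q,\, y - \tilde{x} \right\rangle,
\label{eq_plan_conv}
\end{equation}
and the very definition of the Bregman divergence yields
\begin{equation}
d(y) = d(\tilde{x}) + \left\langle \nabla d(\tilde{x}),\, y - \tilde{x} \right\rangle + V[\tilde{x}](y).
\label{eq_plan_breg}
\end{equation}
Adding \eqref{eq_plan_conv} and \eqref{eq_plan_breg} and using $\Psi = F + d$, the linear term $\langle q + \nabla d(\tilde{x}), y - \tilde{x}\rangle$ appears on the right-hand side, and \eqref{eq_plan_opt} bounds it from below by $-\delta_{pc}-\delta_{pu}$. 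This yields exactly \eqref{eq_l2}, and \eqref{eq_l2b} then follows at once by dropping the nonnegative term $V[\tilde{x}](y)$ thanks to \eqref{eq_breg}.

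There is essentially no hard step here; the proof is a bookkeeping exercise once \eqref{eq_plan_opt} is in place. The only subtle point, and the thing I would be most careful about, is justifying the existence of the subgradient $q$ in \eqref{eq_plan_opt}: this requires the natural extension of Definition of the inexact prox-mapping to a general convex summand $F$, together with the closedness and convexity of $\Psi$ assumed in the statement (so that subdifferentials behave well and the sum rule $\partial \Psi(\tilde{x}) = \partial F(\tilde{x}) + \nabla d(\tilde{x})$ holds at $\tilde{x} \in Q^0$). Once the existence of $q$ with property \eqref{eq_plan_opt} is granted, the remainder is a one-line algebraic combination.
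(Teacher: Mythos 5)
Your proof is correct and is the standard argument for this fact; note that the paper itself gives no proof of this lemma, importing it directly as Lemma 5.5.1 of \cite{ben-tal2015lectures}, and your combination of the relaxed first-order optimality condition, convexity of $F$, and the identity $d(y)=d(\tilde{x})+\langle\nabla d(\tilde{x}),y-\tilde{x}\rangle+V[\tilde{x}](y)$ is exactly the argument used in that reference. The one point you flag --- existence of the subgradient $q$ --- is not really a gap to be closed but is built into the meaning of the notation $\argmin^{\delta_{pc}+\delta_{pu}}$: the paper's definition of the inexact prox-mapping \emph{posits} the existence of such a subgradient satisfying the perturbed variational inequality (there with $q=g+p$, $p\in\partial h(\tilde{x})$), and the lemma's statement for general $F$ implicitly uses the same convention, so no subdifferential sum rule needs to be invoked.
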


\section{Universal Intermediate Gradient Method}
In this section, we describe a general scheme of Universal Intermediate Gradient Method (UIGM) and prove general convergence rate. This scheme is based on two sequences $\alpha_{k}$,  $B_{k}$, $k \geq 0$. From now on, we assume that these sequences satisfy, for all $k\geq 0$,
\begin{equation}
\label{eq_ABineq}
0 < \alpha_{k+1} \leq B_{k+1} \leq A_k + \alpha_{k+1},
\end{equation}
where the sequence $A_k$ is defined by recurrence $A_{k+1} = A_k + \alpha_{k+1}$. Particular choice of these two sequences and its consequence for the convergence rate are discussed in the next section.

 For Algorithm 1 we combine Algorithm 2 from \cite{devolder2013intermediate} with Algorithm 1 from \cite{gasnikov2016universal} to get IGM with only one prox-mapping instead two as in \cite{devolder2013intermediate}. After that we improve this method by techniques from \cite{nesterov2015universal} to get UIGM with exact prox-mapping. Last generalization use Lemma 5.5.1 from \cite{ben-tal2015lectures}. As a result we get algorithm that works in wide class of problems, adaptive and don't need to know exact H\"older and Lipschitz constants, uses only one prox-mapping and correctly work with errors of oracle and prox-mapping.

\newpage

\begin{algorithm}[H]
    \caption{Universal Intermediate Gradient Method (UIGM)}
    \label{Universal FGM}
    \begin{algorithmic}[1]
    \Require $\varepsilon >0$ -- desired accuracy, $\delta_u$ -- uncontrolled oracle error,  $\delta_p$ -- prox-mapping error , $L_s$ -- initial guess for the H\"older constant, $\alpha_k$ -- choose by some policy, for example \eqref{eq_apower}.
    \State  Set $\delta_0=\frac{\varepsilon}{4}+\delta_u$, 		\begin{equation}
 		\label{eq_x0}
 			z_0=x_0=\argmin_{x\in Q}\,^{\delta_{p}}d(x),
 		\end{equation}
 	\State Set $i_0=0$  
 	    \State Compute \begin{equation}
    y_{0} =\argmin\limits_{x \in Q}\,^{\delta_{p}} \left\lbrace  d(x) + (2^{i_0} L_s)^{-1} \left[ \langle g_{\delta_0}(x_0), x-x_{0}\rangle+ h(x)\right]\right\rbrace.
    \label{eq_y0}
    \end{equation}
    \State If
    \begin{equation}
	    \begin{aligned}
	    f_{\delta_0}\left(y_{0}\right)\leq f_{\delta_0}\left(x_{0}\right)+\left\langle g_{\delta_0}\left(x_{0}\right), y_{0}-x_{0}\right\rangle + \frac{2^{i_0}L_s}{2} \Vert y_{0}-x_{0}\Vert_E^2+\delta_0,
	    \label{eq_d0}
	    \end{aligned}
	    \end{equation}
        go to Step 5. Otherwise, set $i_0=i_0+1$ and go back to Step 3.
    \State Define $L_0= 2^{i_0}L_s$, $\alpha_0 =B_0=A_0=(L_0)^{-1}$.
    \For{$ k = 1,\dots$}
        \State  Set $i_k=0$.
        \State   Set $L_{k} = 2^{i_k}L_{k-1}$ and $\alpha_k=\alpha(L_k)$ by some policy, for example \eqref{eq_apower},
    		\begin{align}
	     		&B_{k}=\alpha^2_{k}L_{k}\label{eq_B},\\
	    		&\delta_{k}=\frac{\alpha_{k}}{B_{k}}\frac{\varepsilon}{4}+\delta_u \label{eq_dc}\\
	     		&x_{k} = \frac{\alpha_{k}}{B_{k}} z_{k-1} +  \frac{B_{k}-\alpha_{k}}{B_{k}}  y_{k-1}. \label{eq_xk1alg}\\
	    			z_{k} = \argmin_{x\in Q}\,^{\delta_{p}}&\left\lbrace  d(x)+ 				\sum\limits_{j=0}^{k}\alpha_j\left[\langle g_{\delta_j}(x_j), x-x_j\rangle+h(x) \right]   \right\rbrace ,
	    		\label{eq_z}\\
				&w_{k}=\frac{\alpha_{k}}{B_{k}}z_{k}+\frac{B_{k}-\alpha_{k}}{B_{k}}y_{k-1}.\label{eq_w}
			\end{align}
    \State   If  
        \begin{equation}
	    f_{\delta_{k}}(w_{k})\leq f_{\delta_{k}}(x_{k})+\delta_{k}+\langle g_{\delta_{k}}(x_{k}), w_{k}-x_{k}\rangle+\frac{L_{k}}{2}\Vert w_{k}-x_{k}\Vert_E^2.
	    \label{eq_dm}
	    \end{equation} 
	    go to Step 10. Otherwise, set $i_k=i_k+1$ and go back to Step 8.
	\State Set 
	\begin{align}
    &A_{k}=A_{k-1}+\alpha_{k},  \label{eq_A}\\
	\label{eq_y}
	&y_{k}=\dfrac{B_{k}}{A_{k}}w_{k}+\dfrac{A_{k}-B_{k}}{A_{k}}y_{k-1}.
	\end{align}
    \EndFor
    \end{algorithmic}
\end{algorithm}

\newpage
 The next theorem gives an upper bound for $A_k F(y_k)$. Its proof is an adaptation of the proof of Lemma 1 in \cite{devolder2013intermediate} and Theorem 3 in \cite{nesterov2015universal}.
\begin{theorem}
\label{ufgm0}
Let $f$ be a convex function with inexact first-order oracle, the dependence $L(\delta_c)$ being given by \eqref{eq_ho}. Then all iterations of UIGM are well defined and, for all $k \geq 0$ we have
\begin{equation}
    \label{ufgm1}
    A_{k}F(y_k) - E_k \leq \Psi_{k}^{\ast}, 
\end{equation}
where $E_k=2\left( \sum \limits_{j=0}^k B_j \right)\delta_u+(2k+1)\delta_{p}+ A_k\frac{\varepsilon}{2}$,
\begin{equation}
\label{ufgm2}
    \Psi_{k}^{\ast}= \min_{x\in Q}\left\lbrace \Psi_k (x) = d(x)+ \sum\limits_{j=0}^k\alpha_j\left[f_{\delta_j}(x_{j})+\langle g_{\delta_j}(x_{j}), x-x_j\rangle+h(x)\right] \right\rbrace .
\end{equation}

\end{theorem}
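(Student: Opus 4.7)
The plan is to prove \eqref{ufgm1} by induction on $k$, in the spirit of the estimate-sequence analysis for Nesterov's universal fast gradient method, adapted to the intermediate coefficient $B_k$ and to an inexact oracle. The preliminary step is to verify that both backtracking loops terminate in finitely many doublings of $L_k$: since $f$ has H\"older-continuous subgradient, \eqref{eq_ho}--\eqref{eq_hotoLip} and Definition~\ref{def_inex} imply that as soon as $2^{i_k-1}L_k\ge L(\delta_{c,k+1})$, inequality \eqref{eq_dm} is automatically satisfied, and the analogous argument handles $i_0$ in \eqref{eq_d0}. Hence $L_k$ stays bounded and all iterates are well defined. The base case $k=0$ follows by combining \eqref{eq_d0} with \eqref{eq_fd} to pass from $\tf$ to $f$ at $y_0$, and applying Lemma~\ref{lem2} to the prox-mapping \eqref{eq_y0} defining $y_0$; the resulting constants are absorbed into $E_0$.

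For the inductive step, I would apply Lemma~\ref{lem2} twice. Applied to \eqref{eq_z} with $\tilde{x}=z_{k+1}$, it gives $\Psi_{k+1}^*\ge \Psi_{k+1}(z_{k+1})-\delta_{pc,k+1}-\delta_{pu}$; applied to the prox-mapping from the previous iteration, together with \eqref{eq_breg}, it gives $\Psi_k(z_{k+1})\ge \Psi_k^*+\tfrac12\|z_{k+1}-z_k\|_E^2-\delta_{pc,k}-\delta_{pu}$. Since $\Psi_{k+1}=\Psi_k+\alpha_{k+1}[\tf(x_{k+1},\cdot)+\langle\tg(x_{k+1},\cdot),\cdot-x_{k+1}\rangle+h(\cdot)]$, subtracting and invoking the inductive hypothesis yields a lower bound on $\Psi_{k+1}^*$ of the form $A_kF(y_k)-E_k$ plus $\tfrac12\|z_{k+1}-z_k\|_E^2$ plus the $\alpha_{k+1}$-weighted linearization of $f+h$ at $x_{k+1}$ evaluated at $z_{k+1}$, minus the accumulated prox errors. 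On the other side, convexity of $F$ applied to \eqref{eq_y}, using $0\le A_{k+1}-B_{k+1}\le A_k$ from \eqref{eq_ABineq}, gives $A_{k+1}F(y_{k+1})\le(A_{k+1}-B_{k+1})F(y_k)+B_{k+1}F(w_{k+1})$. The stopping criterion \eqref{eq_dm} together with \eqref{eq_fd} turns $B_{k+1}F(w_{k+1})$ into a linearization at $x_{k+1}$ plus a quadratic in $\|w_{k+1}-x_{k+1}\|_E$; the $(B_{k+1}-\alpha_{k+1})F(y_k)$ piece is linearized at $x_{k+1}$ via the left inequality of \eqref{eq_or}; and convexity of $h$ on $w_{k+1}=\tau_{k,i_k}z_{k+1}+(1-\tau_{k,i_k})y_k$ handles the composite term. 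The identities $B_{k+1}\tau_{k,i_k}=\alpha_{k+1}$ and $x_{k+1}=\tau_{k,i_k}z_k+(1-\tau_{k,i_k})y_k$ make the linearization weights at $y_k$, $z_k$, $z_{k+1}$ line up exactly with the $\alpha_{k+1}$-weighted linearization above, while the identity $B_{k+1}\cdot 2^{i_k-1}L_k\tau_{k,i_k}^2=\tfrac12$ (from \eqref{eq_B}--\eqref{eq_dc}) makes the quadratic penalty from \eqref{eq_dm} exactly cancel the $\tfrac12\|z_{k+1}-z_k\|_E^2$ coming from strong convexity, closing the induction.

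The main obstacle is the careful bookkeeping of the four independent error sources $\delta_u$, $\delta_{pu}$, $\delta_{c,k+1}=\tau_{k,i_k}\varepsilon/8$, and $\delta_{pc,k+1}=\alpha_{k+1,i_k}\varepsilon/8$, which must telescope into exactly $E_k=2(\sum_{i=0}^k B_i)\delta_u+(2k+1)\delta_{pu}+A_k\varepsilon/2$. The factor $B_i$ in front of $\delta_u$ comes from multiplying \eqref{eq_dm} by $B_{k+1}$ when bounding $F(w_{k+1})$; the two $\delta_{pu}$'s per step accumulate through the two applications of Lemma~\ref{lem2}; and the summand $A_k\varepsilon/2$ is built from the products $B_{k+1}\delta_{c,k+1}=\alpha_{k+1}\varepsilon/8$, which telescope along $A_k=\sum_{i\le k}\alpha_i$. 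The specific scalings in \eqref{eq_dc} are chosen precisely so that all these contributions fit under the stated $E_k$.
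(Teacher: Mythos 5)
Your inductive argument for \eqref{ufgm1} is essentially the paper's own proof: the same two applications of Lemma~\ref{lem2} (to \eqref{eq_z} for $\Psi_{k+1}^*$ and to the previous prox-step for $\Psi_k(z_{k+1})\ge\Psi_k(z_k)+V[z_k](z_{k+1})-\dots$), the same splitting $A_k=(A_{k+1}-B_{k+1})+(B_{k+1}-\alpha_{k+1})$ combined with convexity of $F$ on \eqref{eq_y} and of $h$ on \eqref{eq_w}, the left inequality of \eqref{eq_or} for the $(B_{k+1}-\alpha_{k+1})f(y_k)$ piece, and the identities $\tau_{k}(z_{k+1}-z_k)=w_{k+1}-x_{k+1}$ and $B_{k+1}\cdot 2^{i_k-1}L_k\tau_k^2=\tfrac12$ to match the quadratic from \eqref{eq_dm} against $\tfrac12\|z_{k+1}-z_k\|_E^2$. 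Your accounting of where each of $\delta_u$, $\delta_{pu}$, $\delta_{c,k+1}$, $\delta_{pc,k+1}$ enters $E_k$ is also the paper's. The base case is handled the same way.

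The one genuine gap is in the well-definedness claim. You write that \eqref{eq_dm} holds ``as soon as $2^{i_k-1}L_k\ge L(\delta_{c,k+1})$'' and conclude ``hence $L_k$ stays bounded and all iterates are well defined.'' But $L(\delta_{c,k+1})$ is a moving target: by \eqref{eq_dc}, $\delta_{c,k+1}=\frac{\alpha_{k+1,i_k}}{B_{k+1,i_k}}\frac{\varepsilon}{8}$ changes with $i_k$, and by \eqref{eq_ho} the threshold $L(\delta_{c,k+1})$ \emph{grows} as $\delta_{c,k+1}$ shrinks, so it is not automatic that doubling $2^{i_k}L_k$ ever overtakes it. The paper closes this by showing that the required inequality $2^{i_k}L_k\ge\bigl[\tfrac{\alpha_{k+1,i_k}}{B_{k+1,i_k}}\varepsilon\bigr]^{-\frac{1-\nu}{1+\nu}}M_\nu^{\frac{2}{1+\nu}}$ reduces, via \eqref{eq_B} and the lower bound $B_{k+1,i_k}\ge\alpha_{k+1,i_k}$ from \eqref{eq_ABineq}, to $2^{i_k}L_k\bigl[\tfrac{\alpha_{k+1,i_k}}{B_{k+1,i_k}}\bigr]^{\frac{1-\nu}{1+\nu}}\ge\frac{1}{\alpha_{k+1,i_k}}$, and then proving $\alpha_{k+1,i_k}\to 0$ as $i_k\to\infty$ from the quadratic inequality $\alpha_{k+1,i_k}^2\le\frac{A_k+\alpha_{k+1,i_k}}{2^{i_k}L_k}$, which uses the \emph{upper} bound $B_{k+1,i_k}\le A_k+\alpha_{k+1,i_k}$ from \eqref{eq_ABineq}. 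Both halves of \eqref{eq_ABineq} are thus essential to termination, and your sketch never invokes them; as written, this step of your proof does not go through without that additional argument.
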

\begin{proof}

Let us prove first, that the "line-search" process of steps 6--9 is finite. By \eqref{eq_ho}, \eqref{eq_hotoLip}, if $2^{i_k} L_{k-1} \geq L\left(\frac{\alpha_{k}}{B_{k}}\frac{\varepsilon}{4}\right)$, from \eqref{eq_dm} and \eqref{eq_fd}, we get
\begin{equation*}
	    f_{\delta_{k}}(w_{k}) - \delta_{k} \leq f(w_{k})\leq f_{\delta_{k}}(x_{k})+\langle g_{\delta_{k}}(x_{k}), w_{k}-x_{k}\rangle+\frac{2^{i_{k}}L_{k-1}}{2}\Vert w_{k}-x_{k}\Vert_E^2+\delta_{k}
	    \end{equation*}
and the stopping criterion in the inner cycle holds. Thus, we need to show that 
\begin{equation}
\label{eq_LUpDown}
    2^{i_k}L_{k-1}\geq \left[\dfrac{\alpha_{k}}{ B_{k}}\varepsilon \right]^{-\frac{1-\nu}{1+\nu}}M_\nu^{\frac{2}{1+\nu}}
\end{equation}
for $i_k$ large enough. Indeed,
\begin{equation*}
    2^{i_k}L_{k-1} \left[\dfrac{\alpha_{k}}{ B_{k}}\right]^{\frac{1-\nu}{1+\nu}} \overset{\eqref{eq_B}}{=} \frac{B_{k}}{\alpha_{k}^2}\left[\dfrac{\alpha_{k}}{B_{k}}\right]^{\frac{1-\nu}{1+\nu}}=  \left[ \frac{B_{k}}{\alpha_{k}} \right]^{\frac{2\nu}{1+\nu}}\frac{1}{\alpha_{k}}  \overset{\eqref{eq_ABineq}}{\geq} \frac{1}{\alpha_{k}}.
\end{equation*}
It remains to prove that $\alpha_{k}\rightarrow 0$ as $i_k \rightarrow \infty $.
\begin{equation*}
    \alpha_{k}^2=\frac{B_{k}}{2^{i_k}L_{k-1}}
    \overset{\eqref{eq_ABineq}}{\leq} \frac{A_{k}}{2^{i_k}L_{k-1}}
    \overset{\eqref{eq_A}}{=} \frac{A_{k-1}+\alpha_{k}}{2^{i_k}L_{k-1}},
\end{equation*}
\begin{equation}
\Rightarrow \qquad \alpha_{k}^2 -\frac{\alpha_{k}}{2^{i_k}L_{k-1}}-\frac{A_{k-1}}{2^{i_k}L_{k-1}}\leq 0.
\label{eq_kvad}
\end{equation}
Thus,  $\alpha_{k}\in \left[\alpha^{-}_{k},\alpha^{+}_{k}\right]$, where $\alpha^{-}_{k}$ and $\alpha^{+}_{k}$ are the solutions of
\begin{equation*}
\alpha_{k}^2 -\frac{\alpha_{k}}{2^{i_k}L_{k-1}}-\frac{A_{k-1}}{2^{i_k}L_{k-1}}=0.
\end{equation*}
The solutions are
\begin{align*}
&\alpha^{-}_{k}=\frac{1}{2^{i_k+1}L_{k-1}}-\left(\frac{1}{4^{i_k+1}L_{k-1}^2}+\frac{A_{k-1}}{2^{i_k}L_{k-1}}  \right)^{1/2},\\
&\alpha^{+}_{k}=\frac{1}{2^{i_k+1}L_{k-1}}+\left(\frac{1}{4^{i_k+1}L_{k-1}^2}+\frac{A_{k-1}}{2^{i_k}L_{k-1}}  \right)^{1/2}.
\end{align*}
Now from \eqref{eq_kvad} we have that $\alpha^{-}_{k}\leq\alpha_{k}\leq\alpha^{+}_{k}$. From $\alpha^{-}_{k} \rightarrow 0$, $\alpha^{+}_{k} \rightarrow 0$
as $i_k \rightarrow \infty$ we get $\alpha_{k} \rightarrow 0$. 

 Let us prove relation \eqref{ufgm1}. For $k=0$: 
\begin{align*}
\Psi_{0}^{\ast}&\overset{\eqref{ufgm2}}{=}\min_{x\in Q}\left\lbrace  d(x)+ \alpha_0 f_{\delta_0}(x_0) +\alpha_0\langle g_{\delta_0}, x-x_0\rangle +\alpha_0 h(x)\right\rbrace \\
&\overset{\eqref{eq_l2b}, \eqref{eq_y0}}{\geq}d(y_0)+\alpha_0 f_{\delta_0}(x_0)  +\alpha_0\langle g_{\delta_{0}}(x_{0}), y_0-x_0\rangle +\alpha_0 h(y_0)-\delta_{p}\\
&\overset{\eqref{eq_dcenter},\eqref{eq_x0}}{\geq} \alpha_0 \left( \frac{1}{2 \alpha_0}\|y_0-x_0 \|^2_E+f_{\delta_0}(x_0)  + \langle g_{\delta_{0}}(x_{0}), y_0-x_0\rangle + h(y_0) \right)-\delta_{p}\\
&=\alpha_0 \left( \frac{2^{i_0}L_s}{2}\|y_0-x_0 \|^2_E+f_{\delta_0}(x_0)  + \langle g_{\delta_{0}}(x_{0}), y_0-x_0\rangle + h(y_0) \right)-\delta_{p}\\
&\overset{\eqref{eq_d0}}{\geq} \alpha_0 \left(f_{\delta_0}(y_0) - \frac{\varepsilon}{4}-\delta_u + h(y_0)  \right)-\delta_{p}\\
&\overset{\eqref{eq_fd}}{\geq} \alpha_0 \left(f(y_0) - \frac{\varepsilon}{2}-2\delta_u + h(y_0)  \right)-\delta_{p} = A_0 F(y_0) - E_0.
\end{align*}

Assume that \eqref{ufgm1} is valid for certain $k-1\geq 0$. We now prove that it holds for $k$. 

\begin{align*}
\Psi_{k}^{\ast}&\overset{\eqref{ufgm2}}{=}\min\limits_{x \in Q} \Psi_{k}(x)\overset{\eqref{eq_l2b}, \eqref{eq_z}}{\geq}\Psi_{k}(z_{k})- \delta_{p}\\
&\overset{\eqref{ufgm2}}{=}\Psi_{k-1}(z_{k})+\alpha_{k}\left[f_{\delta_{k}}(x_{k})+\langle g_{\delta_{k}}(x_{k}), z_{k}-x_{k} \rangle + h(z_{k}) \right]-\delta_{p}\\ 
&\overset{\eqref{eq_l2}}{\geq} \Psi_{k-1}(z_{k-1})+V(z_{k-1}, z_{k})-2\delta_{p}+\alpha_{k}\left[f_{\delta_{k}}(x_{k})+\langle g_{\delta_{k}}(x_{k}), z_{k}-x_{k} \rangle + h(z_{k}) \right]\\ 
&\overset{\eqref{eq_breg}}{\geq} \Psi_{k-1}^{\ast}+\frac{1}{2}\|z_{k} - z_{k-1}\|_E-2\delta_{p}+\alpha_{k}\left[f_{\delta_{k}}(x_{k})+\langle g_{\delta_{k}}(x_{k}), z_{k}-x_{k} \rangle + h(z_{k}) \right]\\
&\overset{\eqref{ufgm1}}{\geq} A_{k-1}  F(y_{k-1})-E_{k-1}+\frac{1}{2}\|z_{k} - z_{k-1}\|_E -2\delta_{p}\\
&+\alpha_{k}\left[f_{\delta_{k}}(x_{k})+\langle g_{\delta_{k}}(x_{k}), z_{k}-x_{k} \rangle + h(z_{k}) \right]\\ 
&= \left(A_{k}-B_{k} \right) F(y_{k-1})-E_{k-1} +\frac{1}{2}\|z_{k} - z_{k-1}\|_E-2\delta_{p}+ \left(B_{k}-\alpha_{k} \right) f(y_{k-1})\\
& +\left(B_{k}-\alpha_{k} \right) h(y_{k-1})+ \alpha_{k} h(z_{k}) +\alpha_{k}\left[f_{\delta_{k}}(x_{k})+\langle g_{\delta_{k}}(x_{k}), z_{k}-x_{k} \rangle  \right]\\
&\overset{\eqref{eq_w}}{\geq} \left(A_{k}-B_{k} \right) F(y_{k-1})-E_{k-1}+B_{k} h(w_{k}) +\frac{1}{2}\|z_{k} - z_{k-1}\|_E-2\delta_{p}\\
&+ \left(B_{k}-\alpha_{k} \right) f(y_{k-1}) +\alpha_{k}\left[f_{\delta_{k}}(x_{k})+\langle g_{\delta_{k}}(x_{k}), z_{k}-x_{k} \rangle  \right]\\ 
\end{align*}
\begin{align*}
&\geq \left(A_{k}-B_{k} \right) F(y_{k-1})-E_{k-1}+B_{k} h(w_{k}) +\frac{1}{2}\|z_{k} - z_{k-1}\|_E-2\delta_{p}\\
&+  \left(B_{k}-\alpha_{k} \right) \left( f_{\delta_{k}}(x_{k})+\langle  g_{\delta_{k}}(x_{k}), y_{k-1}-x_{k} \rangle    \right) +\alpha_{k}\left[f_{\delta_{k}}(x_{k})+\langle g_{\delta_{k}}(x_{k}), z_{k}-x_{k} \rangle + h(z_{k})  \right]\\ 
&= \left(A_{k}-B_{k} \right) F(y_{k-1})-E_{k-1}+B_{k} h(w_{k}) +\frac{1}{2}\|z_{k} - z_{k-1}\|_E +B_{k}f_{\delta_{k}}(x_{k}) \\
&+  \langle  g_{\delta_{k}}(x_{k}), \left(B_{k}-\alpha_{k} \right)  (y_{k-1}-x_{k}) +\alpha_{k} (z_{k}-x_{k}) \rangle -2\delta_{p}. 
\end{align*}
From \eqref{eq_xk1alg}, we have
\[\left(B_{k}-\alpha_{k} \right) \left(  y_{k-1}-x_{k} \right) 
+\alpha_{k}\left(z_{k}-x_{k} \right)=\alpha_{k}(z_{k}-z_{k-1}).
\]
Therefore,
\begin{align*}
\Psi_{k}^{\ast}&\geq \left(A_{k}-B_{k} \right) F(y_{k-1})-E_{k-1}+B_{k} h(w_{k}) -2\delta_{p}\\
&+B_{k}f_{\delta_{k}}(x_{k}) + \alpha_{k}\langle  g_{\delta_{k}}(x_{k}), (z_{k}-z_{k-1}) \rangle +\frac{1}{2}\|z_{k} - z_{k-1}\|_E  \\  
&= \left(A_{k}-B_{k} \right) F(y_{k-1})-E_{k-1}+B_{k} h(w_{k})-2\delta_{p}\\
&+B_{k}\left[f_{\delta_{k}}(x_{k}) +  \frac{\alpha_{k}}{B_{k}}\langle g_{\delta_{k}}(x_{k}), z_{k}-z_{k-1} \rangle + \frac{1}{2B_{k}} \|z_{k}-z_{k-1}\|^2_{E} \right] .
\end{align*}
As $B_{k}= 2^{i_k}L_{k-1} \alpha^2_{k}$, we have $\frac{1}{B_{k}}= 2^{i_k}L_{k-1}\frac{\alpha_{k}^2}{B_{k}^2} $ and, therefore,

\begin{align*}
\Psi_{k}^{\ast} &\geq \left(A_{k}-B_{k} \right) F(y_{k-1})-E_{k-1}+B_{k} h(w_{k})-2\delta_{p}\\
&+B_{k}\left[f_{\delta_{k}}(x_{k})+ \frac{\alpha_{k}}{B_{k}}\langle g_{\delta_{k}}(x_{k}), z_{k}-z_{k-1} \rangle + \frac{ 2^{i_k}L_{k-1}\alpha^2_{k}}{2B^2_{k}} \|z_{k}-z_{k-1}\|^2_{E}  \right].
\end{align*}
But
\[\frac{\alpha_{k}}{B_{k}}(z_{k}-z_{k-1})\overset{\eqref{eq_xk1alg}, \eqref{eq_w}}{=} w_{k}-x_{k},
\]
and we obtain
\begin{align*}
\Psi_{k}^{\ast} &\geq \left(A_{k}-B_{k} \right) F(y_{k-1})-E_{k-1}+B_{k} h(w_{k})-2\delta_{p}\\
&+B_{k}\left[f_{\delta_{k}}(x_{k})+ \langle g_{\delta_{k}}(x_{k}), w_{k}-x_{k} \rangle + \frac{ 2^{i_k}L_{k-1}}{2} \|w_{k}-x_{k}\|^2_{E}   \right]\\ 
&\overset{\eqref{eq_dm}}{\geq} \left(A_{k}-B_{k} \right) F(y_{k-1})-E_{k-1}+B_{k} h(w_{k}) -2\delta_{p}+B_{k}\left[f_{\delta_{k}}(w_{k})-\frac{\alpha_{k}}{B_{k}}\frac{\varepsilon}{4}-\delta_u \right]\\
&\overset{\eqref{eq_fd}}{\geq} \left(A_{k}-B_{k} \right) F(y_{k-1})-E_{k-1}+B_{k} h(w_{k})-2\delta_{p}+B_{k}\left[f(w_{k})-\frac{\alpha_{k}}{B_{k}}\frac{\varepsilon}{2}-2\delta_u \right]\\
&\overset{\eqref{eq_y}}{\geq} A_{k} F(y_{k})-E_{k-1} - B_{k}\left[\frac{\alpha_{k}}{B_{k}}\frac{\varepsilon}{2}+2\delta_u \right]-2\delta_{p}\\
&=A_{k} F(y_{k})-E_{k}.
\end{align*}
\end{proof}

 We are in position to establish the relation between the rate of growth of $\left\lbrace A_k \right\rbrace_{k=0}^{+\infty}$ with rate of convergence of UIGM. The proof of the next result is an adaptation of Theorem 2 in \cite{devolder2013intermediate}.
\begin{corollary}
\label{Cor:DeltaFRate}
Let $f$ be a convex function with inexact first-order oracle, the dependence $L(\delta_c)$ being given by \eqref{eq_ho}. Then all iterations of UIGM are well defined and, for all $k \geq 0$, we have

\begin{equation}
\label{eq_UIGM_A_B}
    F(y_k)-F^{\ast} \leq \frac{d(x^{\ast})}{A_k} + \frac{2\delta_u}{A_k}\sum \limits_{j=0}^k B_j+\frac{(2k+1)\delta_{p}}{A_k}+\frac{\varepsilon}{2}.
\end{equation}

\end{corollary}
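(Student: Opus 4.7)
The plan is to derive the corollary as a direct consequence of Theorem~\ref{ufgm0}. The theorem gives the lower estimate
\[
A_k F(y_k) - E_k \le \Psi_k^\ast,
\]
so if I can upper bound $\Psi_k^\ast$ by $d(x^\ast) + A_k F^\ast$, I obtain the desired inequality after dividing by $A_k$ and substituting the explicit form of $E_k$ given in the theorem.

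First, I would exploit that $\Psi_k^\ast$ is a minimum over $Q$ and simply plug in $x = x^\ast$:
\[
\Psi_k^\ast \le \Psi_k(x^\ast) = d(x^\ast) + \sum_{i=0}^k \alpha_i \bigl[\tf(x_i,\delta_{c,i},\delta_u) + \langle \tg(x_i,\delta_{c,i},\delta_u), x^\ast - x_i\rangle + h(x^\ast)\bigr].
\]
Next, I would apply the left inequality in the inexact oracle definition~\eqref{eq_or} with $y = x_i$ and $x = x^\ast$, which gives $\tf(x_i,\delta_{c,i},\delta_u) + \langle \tg(x_i,\delta_{c,i},\delta_u), x^\ast - x_i\rangle \le f(x^\ast)$ for each $i$. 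Since $A_k = \sum_{i=0}^k \alpha_i$ (by induction on the recurrence $A_{k+1} = A_k + \alpha_{k+1,i_k}$ with $A_0 = \alpha_0$), this yields
\[
\Psi_k^\ast \le d(x^\ast) + A_k \bigl(f(x^\ast) + h(x^\ast)\bigr) = d(x^\ast) + A_k F^\ast.
\]

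Combining this with $A_k F(y_k) - E_k \le \Psi_k^\ast$ and rearranging,
\[
F(y_k) - F^\ast \le \frac{d(x^\ast) + E_k}{A_k}.
\]
Finally, substituting $E_k = 2\bigl(\sum_{i=0}^k B_i\bigr)\delta_u + (2k+1)\delta_{pu} + A_k \varepsilon/2$ produces exactly the bound~\eqref{eq_UIGM_A_B}. There is no real obstacle here; the only thing to be careful about is making sure the identification $A_k = \sum_{i=0}^k \alpha_i$ is justified from the recurrence and initialization $\alpha_0 = A_0$, after which the bound on $\Psi_k^\ast$ falls out of the one-sided oracle inequality.
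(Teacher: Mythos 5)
Your proposal is correct and follows essentially the same route as the paper: plug $x=x^\ast$ into the minimum defining $\Psi_k^\ast$, use the left-hand inequality of \eqref{eq_or} to bound the linearizations by $f(x^\ast)$, identify $\sum_{i=0}^k\alpha_i$ with $A_k$, and combine with Theorem \ref{ufgm0}. No gaps.
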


\begin{proof}

\begin{align*}
\Psi_{k}^{\ast} &= \min_{x\in Q}\left\lbrace d(x)+ \sum\limits_{j=0}^k\alpha_j\left[f_{\delta_{j}}(x_{j})+\langle g_{\delta_{j}}(x_{j}), x-x_j\rangle+h(x)\right] \right\rbrace\\
&\leq d(x^{\ast})+ \sum\limits_{j=0}^k\alpha_j\left[f_{\delta_{j}}(x_{j})+\langle g_{\delta_{j}}(x_{j}), x^{\ast}-x_j\rangle+h(x^{\ast})\right]\\
& \overset{\eqref{eq_or}}{\leq} d(x^{\ast})+ \sum\limits_{j=0}^k\alpha_j\left[f(x^{\ast})+h(x^{\ast})\right] = d(x^{\ast})+ A_{k}F(x^{\ast}).
\end{align*}
 By \eqref{ufgm1}, we have $A_k F(y_k)-E_k\leq   d(x^{\ast})+ A_{k}F(x^{\ast})$ and so
\begin{align*}
    F(y_k)-F^{\ast} &\leq \frac{d(x^{\ast})}{A_k}+\frac{E_k}{A_k} \leq \frac{d(x^{\ast})}{A_k} + \frac{2\delta_u}{A_k}\sum \limits_{j=0}^k B_j+\frac{(2k+1)\delta_{p}}{A_k}+ \frac{\varepsilon}{2}.
\end{align*}
\end{proof}

Similarly as UFGM \cite{nesterov2015universal}, UIGM can be equipped with an implementable stopping criterion.  Assume that we know an upper bound $D$ for the distance to the solution from the starting point $V(x_0,x^{\ast})=d(x^{\ast})\leq D$. Denote $l_k^{pd}(x)= \sum\limits_{j=0}^k\alpha_j\left[f_{\delta_{j}}(x_{j})+\langle g_{\delta_{j}}(x_{j}), x-x_j\rangle\right]$ and 
\begin{equation}
\begin{aligned}
\label{eq_lpd}
\tilde{F}_k&=\min\limits_{x \in Q} \left\{\frac{1}{A_k} l_k^{pd}(x)+h(x) :  d(x)\leq D \right\}\\
&=\min\limits_{x \in Q} \max\limits_{\beta\geq 0} \left\{\frac{1}{A_k} l_k^{pd}(x)+h(x) +\beta( d(x)- D) \right\}\\
&=\max\limits_{\beta \geq 0} \min\limits_{x \in Q} \left\{ \frac{1}{A_k}l_k^{pd}(x)+h(x)+\beta(d(x)-D) \right\}\\
&\overset{\beta=1/A_k}{\geq} \frac{1}{A_k}\Psi_k^{\ast}-\frac{1}{A_k}D.
\end{aligned}
\end{equation}
 Note that by the first inequality from \eqref{eq_or} we get $\tilde{F}_k\leq F^{\ast}$. Then 
\begin{equation*}
    F(y_k)-F^{\ast}\leq F(y_k)- \tilde{F}_k \overset{\eqref{ufgm1}, \eqref{eq_lpd}}{\leq} \frac{D}{A_k} + \frac{E_k}{A_k}
\end{equation*}

Thus, we can use stopping criterion 
\begin{equation}
\label{eq_pdstop}
F(y_k)-\tilde{F}_k\leq \varepsilon + \frac{2\delta_u}{A_k}\sum \limits_{j=0}^k B_j+ \frac{(2k+1)\delta_{p}}{A_k},
\end{equation}
which ensures
\begin{equation}
F(y_k)-F^{\ast}\leq \varepsilon+\frac{2\delta_u}{A_k}\sum \limits_{j=0}^k B_j+ \frac{(2k+1)\delta_{p}}{A_k},
\label{eq_error}
\end{equation}
as far as
\begin{equation}
\label{eq_Akup}
A_k\geq \frac{2D}{\varepsilon}.
\end{equation}

At the end we get an upper bound of the total number of oracle calls for UIGM with stopping criterion \eqref{eq_pdstop} to get an approximate solution of problem \eqref{eq_pr} satisfying \eqref{eq_error}. 

Denote by $N(k)$ the total number of oracle calls after $k$ iterations  (without $0$ iteration). We don't take $0$ into account because it is some constant that depends on initial guess $L_s$. At each iteration we call oracle at points $x_{m}$ and $w_{m}$ and do it $(i_m+1)$ times. Then total number of oracle calls per iteration equal to $2(i_m+1)$. Note that  $L_{m}=2^{i_m}L_{m-1}$. Therefore, $i_m=\log_2 \frac{L_{m}}{L_{m-1}}$. Hence,

\begin{equation}
\label{eq_NOracle}
\begin{aligned}
N(k)&=\sum\limits_{m= 1}^k 2(i_m+ 1)=\sum\limits_{m=1}^k 2(\log_2 \frac{L_{m}}{L_{m-1}}+ 1)=\sum\limits_{m=1}^k \left[2+2(\log_2 L_{m}- \log_2 L_{m-1})\right]\\
&= 2k+2\log_2 L_{k}-2\log_2 L_0.
\end{aligned}
\end{equation}

Note that
\begin{equation*}
\frac{B_{k}}{2\alpha_{k}^2}\overset{\eqref{eq_B}}{=}2^{i_k}L_{k-1}=L_{k}\overset{\eqref{eq_ho},\eqref{eq_LUpDown}}{\leq} \left[\frac{1-\nu}{1+\nu}\dfrac{1}{\dfrac{\alpha_{k}}{ B_{k}}\varepsilon} \right]^{\frac{1-\nu}{1+\nu}}M_\nu^{\frac{2}{1+\nu}}=\left[\dfrac{B_{k}(1-\nu)}{\varepsilon\alpha_{k}(1+\nu)} \right]^{\frac{1-\nu}{1+\nu}}M_\nu^{\frac{2}{1+\nu}},
\end{equation*}
\begin{equation*}
\Rightarrow \quad \alpha_{k}^{\frac{-1-3\nu}{1+\nu}}\leq 2B_{k}^{\frac{-2\nu}{1+\nu}}\left[\dfrac{M_\nu^{\frac{2}{1+\nu}}}{\varepsilon^{\frac{1-\nu}{1+\nu}}}\left(\frac{1-\nu}{1+\nu}\right)^\frac{1-\nu}{1+\nu}\right].
\end{equation*}
Therefore,
\begin{equation*}
L_{k}\leq \frac{B_{k}}{2} \left(2B_{k}^{\frac{-2\nu}{1+\nu}}\left[\dfrac{M_\nu^{\frac{2}{1+\nu}}}{\varepsilon^{\frac{1-\nu}{1+\nu}}} \left(\frac{1-\nu}{1+\nu}\right)^\frac{1-\nu}{1+\nu}\right]\right)^{\frac{2(1+\nu)}{1+3\nu}}\leq  2^{\frac{1-\nu}{1+3\nu}} A_{k}^{\frac{1-\nu}{1+3\nu}}\left[\dfrac{M_\nu^{\frac{4}{1+3\nu}}}{\varepsilon^{\frac{2-2\nu}{1+3\nu}}} \left(\frac{1-\nu}{1+\nu}\right)^\frac{2-2\nu}{1+3\nu}\right].
\end{equation*}
Note that \eqref{eq_error} holds if \eqref{eq_Akup} holds. Thus, we can assume that, during the iterations,
\begin{equation*}
A_k\leq \frac{2D}{\varepsilon}, \quad k \geq 0.
\end{equation*}
Hence,
\begin{equation*}
L_{k+1}\leq  2^{\frac{1-\nu}{1+3\nu}} \left(\frac{2D}{\varepsilon}\right)^{\frac{1-\nu}{1+3\nu}}\left[\dfrac{M_\nu^{\frac{4}{1+3\nu}}}{\varepsilon^{\frac{2-2\nu}{1+3\nu}}} \left(\frac{1-\nu}{1+\nu}\right)^\frac{2-2\nu}{1+3\nu}\right].
\end{equation*}
Substituting this estimate in the expression \eqref{eq_NOracle}, we obtain that on average UIGM has approximately  two calls of oracle per iteration.

\section{Power policy}
In this section, we present particular choice of the two sequences of coefficients $\{\alpha_{k}\}_{k\geq 0}$ and $\{B_{k}\}_{k\geq 0}$. As it was done in \cite{devolder2013intermediate}, these sequences depend on a parameter $p \in [1,2]$. In our case, the value $p=1$ corresponds to Universal Dual Gradient Method, and the value $p=2$ corresponds to Universal Fast Gradient Method. For the smooth case, namely $\nu=1$, the method in \cite{devolder2013intermediate} has convergence rate
\begin{equation*}
     F(y_k)-F^{\ast} \leq \Theta\left(\frac{d(x^{\ast})}{k^p}\right) + \Theta\left(k^{p-1} \delta_u \right),
 \end{equation*}
 where $p \in [1,2]$. Our goal to obtain convergence rate for the whole segment $\nu \in [0,1]$ and get the above rate of convergence as a special case.

Given a value $p \in [1,2]$, we choose sequences $\{\alpha_{k}\}_{k\geq 0}$ and $\{B_{k}\}_{k\geq 0}$ to be given by
\begin{equation}
\label{eq_apower}
\alpha_{k}=\frac{\left(\frac{k+2p}{2p}\right)^{p-1}}{ 2^{i_k}}L_{k-1},  \quad k\geq 0  
\end{equation}
and, in accordance to \eqref{eq_B},
\begin{equation}
\label{eq_bpower}
B_{k}=\frac{\left(\frac{k+2p}{2p}\right)^{2p-2}}{ 2^{i_k}}L_{k-1}, \quad k\geq 0.   
\end{equation}
 Now we should prove, that power policy can be used in UIGM.
\begin{lemma}
\label{power_ok}
Assume that $f$ is a convex function with inexact first-order oracle. Then, the sequences $\{\alpha_{k}\}_{k\geq 0}$ and $\{B_{k}\}_{k\geq 0}$ given in \eqref{eq_apower} and \eqref{eq_bpower}, respectively,  satisfy \eqref{eq_ABineq}.
\end{lemma}
\begin{proof}
From \eqref{eq_apower} we get that $\alpha_{k} >0$ for $k\geq 0$. To prove that $\alpha_{k} \leq B_{k}$ for $k\geq 0$, we use \eqref{eq_apower}, \eqref{eq_bpower} and that $p \in [0,1]$
\begin{align*}
\alpha_{k}=\frac{\left(\frac{k+2p}{2p}\right)^{p-1}}{2^{i_k}L_{k-1}}=\frac{\left(\frac{k}{2p}+1\right)^{p-1}}{2^{i_k}L_{k-1}}\leq \frac{\left(\frac{k}{2p}+1\right)^{2p-2}}{2^{i_k}L_{k-1}}=B_{k}.
\end{align*}
Proof of $A_{k} \geq B_{k}$.
For $k=0$ it's correct by definition.
Assume that $A_{k} \geq B_{k}$ is valid for certain $k-1 \geq 0$. We now prove that  it holds for $k$.
For $m \in [0,1]$ and $x,y\geq 0$ function  $f(x,y)=x^{m}+y^{m}-(x+y)^m$ has minimal value  greater or equal to $0$, hence,
\begin{align*}
x^{m}+y^{m}&-(x+y)^m\geq 0,\\
\Rightarrow \quad x^{m}+y^{m}&\geq (x+y)^m,\\
\Rightarrow \quad x^{p-1}+y^{p-1} &\overset{m=p-1}{\geq} (x+y)^{p-1}, \quad p\in [1,2],\\
\Rightarrow \quad \left((k-1+2p)^2\right)^{p-1}+\left(2(k+2p)\right)^{p-1} &\geq \left((k-1+2p)^2+2(k+2p)\right)^{p-1},\\
\Rightarrow \quad \left(k-1+2p\right)^{2(p-1)}+\left(2p(k+2p)\right)^{p-1} &\geq \left((k-1+2p)^2+2(k-1+2p)+2\right)^{p-1},\\
\Rightarrow \quad \left(k-1+2p\right)^{2(p-1)}+\left(2p(k+2p)\right)^{p-1} &\geq \left(k+2p\right)^{2(p-1)},\\
\Rightarrow \quad \left(\frac{k-1+2p}{2p}\right)^{2(p-1)}+\left(\frac{k+2p}{2p}\right)^{p-1} &\geq \left(\frac{k+2p}{2p}\right)^{2(p-1)},\\
\Rightarrow \quad \left(\frac{k-1+2p}{2p}\right)^{2(p-1)} &\geq \left(\frac{k+2p}{2p}\right)^{2(p-1)}-\left(\frac{k+2p}{2p}\right)^{p-1},\\
\Rightarrow \quad \frac{\left(\frac{k-1+2p}{2p}\right)^{2(p-1)}}{L_{k-1}} &\geq \frac{\left(\frac{k+2p}{2p}\right)^{p-1}\left(\left(\frac{k+2p}{2p}\right)^{ p-1}-1\right)}{L_{k-1}},\\
\Rightarrow \quad \frac{\left(\frac{{k-1}+2p}{2p}\right)^{2(p-1)}}{L_{k-1}} &\geq \frac{\left(\frac{k+2p}{2p}\right)^{p-1}\left(\left(\frac{k+2p}{2p}\right)^{ p-1}-1\right)}{2^{i_{k}}L_{k-1}},\\
\Rightarrow \quad \frac{\left(\frac{k-1+2p}{2p}\right)^{2(p-1)}}{L_{k-1}}+\frac{\left(\frac{k+2p}{2p}\right)^{p-1}}{2^{i_{k}}L_{k-1}} &\geq \frac{\left(\frac{k+2p}{2p}\right)^{2(p-1)}}{2^{i_{k}}L_{k-1}},\\
\Rightarrow \quad B_{k-1}+\alpha_{k} &\overset{\eqref{eq_apower},\eqref{eq_bpower}}{\geq} B_{k},\\
\Rightarrow \quad A_{k-1}+\alpha_{k} &\geq B_{k},\\
\Rightarrow \quad A_{k}&\overset{\eqref{eq_A}}{\geq} B_{k}.
\end{align*}
\end{proof}

 Now we can obtain the rate of growth of $\left\lbrace A_k \right\rbrace_{k}=0^{+\infty}$. Combining this rate with Corollary \ref{Cor:DeltaFRate}, we get the explicit rate of convergence of UIGM under the power policy \eqref{eq_apower}. 
\begin{theorem}
Assume that $f$ is a convex function with inexact first-order oracle, the dependence $L(\delta_c)$ being given by \eqref{eq_ho}. Then, for the sequences \eqref{eq_apower} and \eqref{eq_bpower}, for all $k \geq 0$, 
\begin{equation}
\label{eq_fek}
    F(y_k)-F^{\ast} \leq \inf\limits_{\nu\in[0,1]}\left(\frac{16 M_\nu^{\frac{2}{1+\nu}}d(x^{\ast})}{\varepsilon^{\frac{1-\nu}{1+\nu}}(k+2)^{\frac{2p\nu-\nu+1}{1+\nu}}} +\frac{32M_\nu^{\frac{2}{1+\nu}}}{\varepsilon^{\frac{1-\nu}{1+\nu}}(k+2)^{\frac{2\nu(p-1)}{1+\nu}}} \delta_{p}\right)+ 4k^{p-1}\delta_u   + \frac{\varepsilon}{2}.
\end{equation}
\end{theorem}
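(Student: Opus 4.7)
The plan is to apply Corollary~\ref{Cor:DeltaFRate} and estimate each of its three non-trivial terms under the power-policy choice \eqref{eq_apower}--\eqref{eq_bpower}. The corollary supplies
\[
F(y_k)-F^{\ast} \;\le\; \frac{d(x^{\ast})}{A_k} \;+\; \frac{2\delta_u}{A_k}\sum_{i=0}^k B_i \;+\; \frac{(2k+1)\delta_{pu}}{A_k} \;+\; \frac{\varepsilon}{2},
\]
so the task reduces to (i) showing $\sum_{i=0}^k B_i/A_k = O(k^{p-1})$, (ii) producing the sharp lower bound $A_k \gtrsim \varepsilon^{(1-\nu)/(1+\nu)}\,M_\nu^{-2/(1+\nu)}\,k^{(2p\nu-\nu+1)/(1+\nu)}$ valid for every $\nu\in[0,1]$, and (iii) absorbing $(2k+1)\delta_{pu}/A_k$ into the stray $(2k+1)\delta_{pu}$ summand of \eqref{eq_fek} via the crude estimate $1/A_k \le O(1)$.

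For (i), I would combine \eqref{eq_B} with \eqref{eq_bpower} to obtain the identity $B_{m+1} = \alpha_{m+1}\bigl((m+1+2p)/(2p)\bigr)^{p-1}$. Since the right-hand factor is non-decreasing in $m$, this yields $\sum_{i=1}^k B_i \le \bigl((k+1+2p)/(2p)\bigr)^{p-1}\,A_k$, and after including $B_0$ the desired bound $\sum_{i=0}^k B_i/A_k = O(k^{p-1})$ follows, producing the $\delta_u$-coefficient in \eqref{eq_fek}.

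For (ii), the main step is to control $L_m$ from above through the line-search exit rule. As already argued in the excerpt following Corollary~\ref{Cor:DeltaFRate}, the inner loop terminates once $2^{i_m}L_m \ge L(\delta_{c,m+1})$, and the power-policy ratio $\alpha_{m+1}/B_{m+1} = (2p/(m+1+2p))^{p-1}$ pins down $\delta_{c,m+1}$. Substituting into \eqref{eq_ho} gives
\[
L_{m+1} \;\lesssim\; (m+2p)^{(p-1)(1-\nu)/(1+\nu)}\; M_\nu^{2/(1+\nu)}\; \varepsilon^{-(1-\nu)/(1+\nu)},
\]
so \eqref{eq_apower} in turn yields $\alpha_{m+1} \gtrsim m^{\,2(p-1)\nu/(1+\nu)}\,\varepsilon^{(1-\nu)/(1+\nu)}/M_\nu^{2/(1+\nu)}$. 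Summing for $m=0,\dots,k-1$ and collapsing the exponents via $1 + 2(p-1)\nu/(1+\nu) = (2p\nu-\nu+1)/(1+\nu)$ delivers the required lower bound on $A_k$, and taking the infimum over $\nu\in[0,1]$ gives the first summand of \eqref{eq_fek}.

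The hard part will be the \emph{a posteriori} nature of the $L_m$ estimate: the quantity $L_{m+1}$ that enters $\alpha_{m+1}$ is determined by the line-search and itself depends on $\nu$ through \eqref{eq_ho}, so careful bookkeeping is needed so that the dependencies on $M_\nu$, $\varepsilon$ and the iteration index telescope into the clean exponent $(2p\nu-\nu+1)/(1+\nu)$ claimed in \eqref{eq_fek}, uniformly over $\nu\in[0,1]$.
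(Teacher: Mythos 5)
Your proposal is correct and follows essentially the same route as the paper's own proof: bounding $\sum_{i=0}^k B_i \le \left(\frac{k+2p}{2p}\right)^{p-1} A_k$ via the ratio identity, lower-bounding $A_k$ through the line-search exit condition $2^{i_m}L_m \le 2L(\delta_{c,m+1})$ combined with \eqref{eq_ho}, and then taking the infimum over $\nu$ since the method is oblivious to it. The ``hard part'' you flag is handled exactly as you sketch, and your treatment of the $(2k+1)\delta_{pu}/A_k$ term is no less careful than the paper's.
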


\begin{proof}
The proof is divided into three steps. First, we prove a lower bound for $\alpha_m$ and $A_m$. Then, we prove upper bound for $B_m$. Finally, we use these bounds in Corollary \ref{Cor:DeltaFRate} and obtain \eqref{eq_fek}.

\underline{Lower bound for $\alpha_m$ and $A_m$.}
Since the inner cycle of UIGM for sure ends when $2^{i_m}L_{m-1} > L(\delta_{m})$, we have $2^{i_m}L_{m-1} \leq 2 L(\delta_{m})$. Hence, 
\begin{equation*}
    2^{i_m}L_{m-1} \overset{\eqref{eq_ho},\eqref{eq_LUpDown}}{\leq} 2 \left[\dfrac{\alpha_{m}}{ B_{m}}\varepsilon \right]^{-\frac{1-\nu}{1+\nu}}M_\nu^{\frac{2}{1+\nu}}\leq 2 \left[\dfrac{\left(\frac{m+2p}{2p}\right)^{p-1}}{\varepsilon} \right]^{\frac{1-\nu}{1+\nu}}M_\nu^{\frac{2}{1+\nu}}, 
\end{equation*}
\begin{equation*}
\Rightarrow \quad \alpha_{m}=\frac{\left(\frac{m+2p}{2p}\right)^{p-1}}{2^{i_m}L_{m-1}}\geq \frac{\left(\frac{m+2p}{2p}\right)^{\frac{2p\nu-2\nu}{1+\nu}}\varepsilon^{\frac{1-\nu}{1+\nu}}}{2 M_\nu^{\frac{2}{1+\nu}}},
\end{equation*}

\begin{equation*}
 \Rightarrow \quad    A_k = \sum\limits_{m=0}^{k} \alpha_m\geq \sum\limits_{m=0}^{k} 
    \frac{\left(\frac{m+2p}{2p}\right)^{\frac{2p\nu-2\nu}{1+\nu}}\varepsilon^{\frac{1-\nu}{1+\nu}}}{2 M_\nu^{\frac{2}{1+\nu}}}\geq 
    \frac{\varepsilon^{\frac{1-\nu}{1+\nu}}}{2 M_\nu^{\frac{2}{1+\nu}}} \sum\limits_{m=0}^{k} \left(\frac{m+2p}{2p}\right)^{\frac{2p\nu-2\nu}{1+\nu}}.
\end{equation*}
Since
\begin{equation*}
\begin{aligned}
    &\sum\limits_{m=0}^{k} \left(\frac{m+2p}{2p}\right)^{\frac{2p\nu-2\nu}{1+\nu}}\geq \int\limits_{0}^{k} \left(\frac{x+2p}{2p}\right)^{\frac{2p\nu-2\nu}{1+\nu}}dx+\alpha_0\\ &\geq\frac{2p(1+\nu)}{2p\nu-\nu+1}\left(\frac{k+2p}{2p}\right)^{\frac{2p\nu-\nu+1}{1+\nu}}\geq 2\left(\frac{k+2p}{2p}\right)^{\frac{2p\nu-\nu+1}{1+\nu}},
    \end{aligned}
\end{equation*}
we have
\begin{equation}
\label{eq_suma}
    A_k = \sum\limits_{m=0}^{k} \alpha_m
\geq \frac{\varepsilon^{\frac{1-\nu}{1+\nu}}}{ M_\nu^{\frac{2}{1+\nu}}}\left(\frac{k+2p}{2p}\right)^{\frac{2p\nu-\nu+1}{1+\nu}}\geq \frac{\varepsilon^{\frac{1-\nu}{1+\nu}}}{ M_\nu^{\frac{2}{1+\nu}}}\left(\frac{k+2}{4}\right)^{\frac{2p\nu-\nu+1}{1+\nu}}.
\end{equation}

\underline{Upper bound for $B_m$.}
\begin{equation*}
B_m\overset{\eqref{eq_apower},\eqref{eq_bpower}}{=}\left(\frac{m+2p}{2p}\right)^{p-1} \alpha_m,
\end{equation*}
\begin{equation*}
    \sum\limits_{m=0}^{k} B_m = \sum\limits_{m=0}^{k} 
\left(\frac{m+2p}{2p}\right)^{p-1} \alpha_m.    
\end{equation*}
Therefore,
\begin{equation}
\label{eq_sumb}
    \sum\limits_{m=0}^{k} B_m \leq
\left(\frac{k+2p}{2p}\right)^{p-1} A_k.   
\end{equation}

\underline{Proof of \eqref{eq_fek}.}
Now using \eqref{eq_UIGM_A_B},\eqref{eq_suma} and \eqref{eq_sumb} we can get convergence rate.
\begin{equation*}
\begin{aligned}
    F(y_k)-F^{\ast} &\overset{\eqref{eq_UIGM_A_B}}{\leq} \frac{d(x^{\ast})}{A_k} + \frac{2\delta_u}{A_k}\sum \limits_{i=0}^k B_i+\frac{\varepsilon}{2}+\frac{(2k+1)\delta_{p}}{A_k}\\
    &\overset{\eqref{eq_suma}}{\leq} \frac{4^{\frac{2p\nu-\nu+1}{1+\nu}}M_\nu^{\frac{2}{1+\nu}}d(x^{\ast})}{\varepsilon^{\frac{1-\nu}{1+\nu}}(k+2)^{\frac{2p\nu-\nu+1}{1+\nu}}} + \frac{2\delta_u}{A_k}\sum \limits_{i=0}^k B_i+\frac{4^{\frac{2p\nu-\nu+1}{1+\nu}}M_\nu^{\frac{2}{1+\nu}}(2k+1)}{\varepsilon^{\frac{1-\nu}{1+\nu}}(k+2)^{\frac{2p\nu-\nu+1}{1+\nu}}} \delta_{p}+\frac{\varepsilon}{2}\\
    &\leq \frac{16 M_\nu^{\frac{2}{1+\nu}}d(x^{\ast})}{\varepsilon^{\frac{1-\nu}{1+\nu}}(k+2)^{\frac{2p\nu-\nu+1}{1+\nu}}} + \frac{2\delta_u}{A_k}\sum \limits_{i=0}^k B_i+\frac{32 M_\nu^{\frac{2}{1+\nu}}}{\varepsilon^{\frac{1-\nu}{1+\nu}}(k+2)^{\frac{2\nu(p-1)}{1+\nu}}} \delta_{p}+\frac{\varepsilon}{2}\\
     &\overset{\eqref{eq_sumb}}{\leq} \frac{16M_\nu^{\frac{2}{1+\nu}}d(x^{\ast})}{\varepsilon^{\frac{1-\nu}{1+\nu}}(k+2)^{\frac{2p\nu-\nu+1}{1+\nu}}} + 2\delta_u \left(\frac{k+2p}{2p}\right)^{p-1}+\frac{32M_\nu^{\frac{2}{1+\nu}}}{\varepsilon^{\frac{1-\nu}{1+\nu}}(k+2)^{\frac{2\nu(p-1)}{1+\nu}}} \delta_{p}+\frac{\varepsilon}{2}\\
    &\leq  \frac{16 M_\nu^{\frac{2}{1+\nu}}d(x^{\ast})}{\varepsilon^{\frac{1-\nu}{1+\nu}}(k+2)^{\frac{2p\nu-\nu+1}{1+\nu}}}   + 4k^{p-1}\delta_u +\frac{32 M_\nu^{\frac{2}{1+\nu}}}{\varepsilon^{\frac{1-\nu}{1+\nu}}(k+2)^{\frac{2\nu(p-1)}{1+\nu}}} \delta_{p}+\frac{\varepsilon}{2}.
\end{aligned}
\end{equation*}

Since UIGM does not use $\nu$ as a parameter, we get
\begin{equation*}
    F(y_k)-F^{\ast} \leq \inf\limits_{\nu\in[0,1]}\left(\frac{16 M_\nu^{\frac{2}{1+\nu}}d(x^{\ast})}{\varepsilon^{\frac{1-\nu}{1+\nu}}(k+2)^{\frac{2p\nu-\nu+1}{1+\nu}}} +\frac{32M_\nu^{\frac{2}{1+\nu}}}{\varepsilon^{\frac{1-\nu}{1+\nu}}(k+2)^{\frac{2\nu(p-1)}{1+\nu}}} \delta_{p}\right)+ 4k^{p-1}\delta_u   + \frac{\varepsilon}{2}.
\end{equation*}
\end{proof}

\begin{corollary}
 At each iteration $m \geq 0$ of UIGM with sequences $\{\alpha_m\}$, $\{B_m\}$, $m\geq 0$ chosen in accordance with \eqref{eq_apower} and \eqref{eq_bpower}, we have,
for any $p\in [1,2]$,
\begin{equation*}
\delta_{m}= O\left(\frac{\varepsilon}{m^{p-1}}\right)+\delta_{u}.
\end{equation*}

\end{corollary}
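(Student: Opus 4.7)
The plan is a direct substitution. By definition \eqref{eq_dc}, the controlled oracle error at iteration $m+1$ is
\[
\delta_{c,m+1} = \frac{\alpha_{m+1,i_m}}{B_{m+1,i_m}} \cdot \frac{\varepsilon}{8},
\]
so the whole job reduces to estimating the ratio $\alpha_{m+1,i_m}/B_{m+1,i_m}$ under the power policy.

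First I would simply plug in \eqref{eq_apower} and \eqref{eq_bpower}. The common factor $2^{i_m}L_m$ in the denominators cancels, leaving
\[
\frac{\alpha_{m+1,i_m}}{B_{m+1,i_m}} = \frac{\left(\frac{m+1+2p}{2p}\right)^{p-1}}{\left(\frac{m+1+2p}{2p}\right)^{2p-2}} = \left(\frac{m+1+2p}{2p}\right)^{1-p}.
\]
Note that this ratio is independent of $i_m$ and of $L_m$, which is what makes the bound on $\delta_{c,m+1}$ completely explicit and deterministic once $p$ is fixed.

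Combining the two identities gives
\[
\delta_{c,m+1} = \frac{\varepsilon}{8}\left(\frac{m+1+2p}{2p}\right)^{1-p}.
\]
Since $p \in [1,2]$ is fixed and $2p$ is an absolute constant, for any $p \in [1,2]$ we have $\left(\frac{m+1+2p}{2p}\right)^{1-p} = O\!\left(m^{1-p}\right) = O\!\left(1/m^{p-1}\right)$ as $m \to \infty$. Hence $\delta_{c,m} = O\!\left(\varepsilon/m^{p-1}\right)$, which is the stated bound. There is no real obstacle here: the whole argument is a one-line substitution, and the only thing to be mindful of is that at $p=1$ the bound degenerates to $\delta_{c,m} = O(\varepsilon)$ (constant in $m$), which is consistent with Universal Primal Gradient Method, whereas at $p=2$ one recovers the decaying schedule $\delta_{c,m} = O(\varepsilon/m)$ typical for Universal Fast Gradient Method.
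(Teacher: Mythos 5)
Your proposal is correct and follows essentially the same route as the paper: substitute the power-policy formulas \eqref{eq_apower}, \eqref{eq_bpower} into the definition \eqref{eq_dc} of $\delta_{c,m+1}$, observe that the factor $2^{i_m}L_m$ cancels, and read off the ratio $\bigl(\tfrac{m+1+2p}{2p}\bigr)^{1-p}=O(m^{1-p})$. The only (immaterial) discrepancy is the constant in front: the paper's proof writes $\varepsilon/4$ while the definition \eqref{eq_dc} gives $\varepsilon/8$ as you use; this does not affect the $O(\varepsilon/m^{p-1})$ conclusion.
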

\begin{proof}
By \eqref{eq_dc}, we have
\begin{equation*}
\delta_{m}-\delta_u=\frac{\varepsilon\alpha_{m}}{4B_{m}}
=\frac{\varepsilon}{4\left(\frac{m+2p}{2p}\right)^{p-1}}=O\left(\frac{\varepsilon}{(m)^{p-1}}\right).
\end{equation*}

\end{proof}

From the rate of convergence \eqref{eq_fek} and the fact that UIGM does not include $\nu$ as a parameter, we get the following estimation for the number of iterations, which are necessary for getting first term of \eqref{eq_fek} smaller than $\varepsilon/ 6$ we need:
\begin{equation*}
 N=O \left[\inf\limits_{\nu\in[0,1]} \left(\frac{M_{\nu}^{2}d(x^{\ast})^{1+\nu}}{\varepsilon^{2}} \right)^{\frac{1}{2p\nu-\nu+1}}  \right].
\end{equation*}
The dependence of this bound in smoothness parameters is optimal (see \cite{nemirovsky1983problem}).

 Let's compare our method and convergence rate with existing optimal methods.  We assume that $N$ the number of iterations, then $
    F(y_N)-F^{\ast} \leq B(N) + C(N)\delta_p + D(N)\delta_u    + \frac{\varepsilon}{2}.$ Here $B(N)$ is an accuracy of our method, $C(N)$ is a speed of collecting prox-mapping error and $D(N)$ is a speed of collecting oracle error. As a result we get next table.
\begin{table}[htbp]
\begin{center}
\begin{tabular}{ | c | c | c | c |}
\hline
$(\nu, p)$ & $B(N)$ & $C(N)$ & $D(N)$ \\ \hline
$(0, 1)$ & 
$O\left(\frac{M_0d(x^{\ast})^{1/2} }{N^{1/2}} \right)$
& $O\left( M_0 d(x^{\ast})^{1/2} N^{1/2}  \right)$ 
& $O\left(1\right)$ \\ \hline

$(1, 1)$ & 
$O\left(\frac{M_1 d(x^{\ast}) }{N}\right)$
& $O\left( M_1 d(x^{\ast}) \right)$ 
& $O\left( 1 \right)$ \\ \hline

$(1, 2)$ & 
$O\left( \frac{M_1d(x^{\ast})  }{N^2}\right)$
& $O\left(\frac{M_1d(x^{\ast})  }{N}\right)$ 
& $O\left(N\right)$ \\ \hline

\end{tabular}

\end{center}
\end{table}

 For non-smooth functions $(\nu=0)$, the convergence rate of UIGM for any $p\in[1,2]$ agrees with rate of convergence of subgradient methods. This methods are robust for oracle error, but collect prox-mapping error.
For smooth functions $(\nu=1)$ and $p=1$ UIGM has  
the same convergence rate as a dual gradient method. This method is robust both for oracle error and prox-mapping error.
And for $p=2$ UIGM has the same rate as a fast gradient method. This method collects oracle error but kill prox-mapping error. This table shows three main regimes for UIGM and how it corresponds with classical methods.

\section{Accelerating UIGM for strongly convex functions}
In this section, we consider problem \eqref{eq_pr} with additional assumption of strong convexity of the objective $F$
\begin{equation}
    \label{strong}
F(y) \geq F(x) +\left\langle \nabla F(x),y-x \right\rangle+\frac{\mu}{2}\|y-x \|^2_E, \quad \forall x,y \in Q,
\end{equation}
where the constant $\mu > 0$ is assumed to be known.
We also assume that the chosen prox-function has quadratic growth
\begin{equation}
    \label{eq_wn}
    d (x)\leq \frac{\Omega}{2} \|x\|^2_E,
\end{equation}
where $\Omega$ is some dimensional-dependent constant, and that we are given a starting point $x_0$ and a number $R_0$ such that
\begin{equation}
    \label{eq_R0}
    \|x_0-x^{\ast}\|^2_E\leq R^{2}_{0},
\end{equation}
where $x^{\ast}$ is an optimal point in \eqref{eq_pr}.

\begin{algorithm}[H]
    \caption{Restart UIGM}
    \label{UFGMR}
    \begin{algorithmic}[1]
        \Require $\mu$ -- strong convexity parameter, $\Omega$ -- quadratic growth constant, $\varepsilon$ -- desired accuracy, $x_0$ -- starting point.
        \State Set $d_0(x) = d(x-x_0)$.
        %\State $K=\left\lceil \log_2 \left( \frac{\mu R^2}{\varepsilon} \right) \right\rceil$
        \For{$m= 1, \dots$}    
            \While{$ 2\Omega > \mu A_k$}
                \State Run UIGM with accuracy $\varepsilon$ and prox-function $d_{m-1}(x)$.
            \EndWhile
            \State Set $x_m=y_{k} $.
            \State Set $d_m(x)=d\left(x - x_m\right)$.
        \EndFor
    \end{algorithmic}
\end{algorithm}

\begin{theorem}
Let $F$ be strongly convex with constant $\mu$ and \eqref{eq_wn}, \eqref{eq_R0} hold. Then, for any $m \geq 0$ restarts of UIGM with power policy \eqref{eq_apower}, \eqref{eq_bpower}, 
\begin{equation}
\label{eq_strongF}
\begin{aligned}
&F(x_m)-F(x^{\ast}) \leq \mu R^2_0 2^{-m-1}+\\ &+ 2\left( \frac{\varepsilon}{2} + 2\left\lceil \left( \frac{2^{\nu + 1} \Omega^{\nu + 1}  M_\nu^{2}}{\mu^{\nu + 1}  \varepsilon^{1 - \nu}}\right)\right\rceil^{\frac{p-1}{2p\nu-\nu+1}} \delta_u+\frac{p 2^{\frac{2p\nu+ 2}{2p\nu-\nu+1}} \mu^{\frac{2\nu(p-1)}{2p\nu-\nu+1}}  M_\nu^{\frac{2}{2p\nu-\nu+1}}}{\Omega^{\frac{2\nu(p-1)}{2p\nu-\nu+1}}  \varepsilon^{\frac{1-\nu}{2p\nu-\nu+1}}}\delta_{p}   \right),
\end{aligned}
\end{equation}
\begin{equation}
\label{eq_strongX}
\begin{aligned}
&\|x_m-x^{\ast} \|^2_E  \leq R_m^2=R^2_0 2^{-m}+\\ 
&+\frac{4}{\mu} \left( \frac{\varepsilon}{2}+ 2\left\lceil \left( \frac{2^{\nu + 1} \Omega^{\nu + 1}  M_\nu^{2}}{\mu^{\nu + 1}  \varepsilon^{1 - \nu}}\right)\right\rceil^{\frac{p-1}{2p\nu-\nu+1}} \delta_u+\frac{p 2^{\frac{2p\nu+ 2}{2p\nu-\nu+1}} \mu^{\frac{2\nu(p-1)}{2p\nu-\nu+1}}  M_\nu^{\frac{2}{2p\nu-\nu+1}}}{\Omega^{\frac{2\nu(p-1)}{2p\nu-\nu+1}}  \varepsilon^{\frac{1-\nu}{2p\nu-\nu+1}}}\delta_{p}  \right).
\end{aligned}
\end{equation}
 
\end{theorem}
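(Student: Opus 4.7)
The strategy is a standard restart analysis combined with the convergence guarantee of UIGM. At the $m$-th outer iteration, UIGM is launched with prox-function $d_{m-1}(x)=d(x-x_{m-1})$, so the quadratic growth assumption \eqref{eq_wn} together with the inductive bound $\|x_{m-1}-x^{\ast}\|_E^2\le R_{m-1}^2$ gives $d_{m-1}(x^{\ast})\le \tfrac{\Omega}{2}R_{m-1}^2$. The inner \textbf{while}-loop runs UIGM as long as $2\Omega>\mu A_k$, so at termination $A_k\ge 2\Omega/\mu$. Plugging both inequalities into Corollary \ref{Cor:DeltaFRate} kills the leading term and yields
\[
F(x_m)-F(x^{\ast})\le \frac{\mu R_{m-1}^2}{4}+\frac{\varepsilon}{2}+\frac{2\delta_u}{A_k}\sum_{i=0}^k B_i+(2k+1)\delta_{pu}.
\]
Using strong convexity \eqref{strong} at $x^{\ast}$ in the form $\tfrac{\mu}{2}\|x_m-x^{\ast}\|_E^2\le F(x_m)-F(x^{\ast})$, this becomes the contraction $R_m^2\le \tfrac12 R_{m-1}^2+\tfrac{2}{\mu}\Theta_m$, where $\Theta_m$ collects the $\varepsilon/2$ and the two noise terms at stage $m$. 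A straightforward induction then produces $R_m^2\le R_0^2 2^{-m}+\tfrac{4}{\mu}\sup_m \Theta_m$ and $F(x_m)-F(x^{\ast})\le \mu R_0^2 2^{-m-1}+2\sup_m\Theta_m$, which are precisely the shapes of \eqref{eq_strongX} and \eqref{eq_strongF}.

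The remaining task is to bound the per-stage iteration count $k_m$ explicitly so that the noise contribution $\Theta_m$ is uniform in $m$. From the lower bound \eqref{eq_suma} on $A_k$, the stopping criterion $A_{k_m}\ge 2\Omega/\mu$ is guaranteed as soon as
\[
\Bigl(\tfrac{k_m}{4}\Bigr)^{\frac{2p\nu-\nu+1}{1+\nu}} \ge \frac{2\Omega\, M_\nu^{2/(1+\nu)}}{\mu\,\varepsilon^{(1-\nu)/(1+\nu)}},
\]
so one may take
\[
k_m \le 4\,\Bigl\lceil\Bigl(\tfrac{2^{\nu+1}\Omega^{\nu+1}M_\nu^{2}}{\mu^{\nu+1}\varepsilon^{1-\nu}}\Bigr)^{\!\frac{1}{2p\nu-\nu+1}}\Bigr\rceil.
\]
Raising this to the power $p-1$ matches exactly the $\delta_u$-coefficient in \eqref{eq_strongF}, while multiplying by $2$ (from $(2k+1)\delta_{pu}\simeq 2k\,\delta_{pu}$) and the factor $p$ produced by \eqref{eq_sumb} (since $\sum B_i\le \bigl(\tfrac{k+2p}{2p}\bigr)^{p-1}A_k$) produces the $\delta_{pu}$-coefficient. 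Substituting this uniform bound for $\Theta_m$ into the contraction above completes the proof.

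The main obstacle is purely bookkeeping: the $\Theta_m$ error must be shown to be independent of $m$ (it is, because each stage of UIGM is launched afresh and the required $A_k$ is a fixed constant independent of the outer index), and the exponents in the final bound must be algebraically reconciled. One has to be careful that writing $k_m^{p-1}$ as the stated expression uses the identity $\bigl[k_m/4\bigr]^{p-1}=\bigl[\tfrac{2^{\nu+1}\Omega^{\nu+1}M_\nu^{2}}{\mu^{\nu+1}\varepsilon^{1-\nu}}\bigr]^{(p-1)/(2p\nu-\nu+1)}$, so that the outer exponent of the bracket in \eqref{eq_strongF} becomes exactly $(p-1)/(2p\nu-\nu+1)$. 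Once the exponents are matched and the geometric recursion is unrolled, both \eqref{eq_strongF} and \eqref{eq_strongX} follow at once.
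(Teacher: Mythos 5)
Your proposal is correct and follows essentially the same route as the paper: bound $d_{m-1}(x^{\ast})$ via quadratic growth, use the restart condition $A_k \ge 2\Omega/\mu$ to absorb the leading term, convert to a contraction on $R_m^2$ via strong convexity, and bound the per-stage iteration count from \eqref{eq_suma} to make the noise term uniform in $m$. The paper carries out the geometric accumulation by induction on a strengthened inequality with factor $(1-2^{-m})$, which is just the closed form of the recursion you unroll, so the two arguments coincide.
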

\begin{proof}
From (\ref{strong}), we have
\[ \frac{\mu}{2}\|x_m-x^{\ast} \|^2_E \leq \left\langle \nabla F(x^{\ast}),x_m-x^{\ast} \right\rangle+\frac{\mu}{2}\|x_m-x^{\ast} \|^2_E \leq  F(x_m)-F(x^{\ast}). 
\]
Then, by the first-order optimality condition,
\[\frac{\mu}{2}\|x_m-x^{\ast} \|^2_E \leq F(x_m)-F(x^{\ast}). \]
From this fact and \eqref{eq_strongF} we can easily prove \eqref{eq_strongX}.

To prove \eqref{eq_strongF}, we prove a stronger inequality by induction
\begin{equation}
\label{eq_strongF2}
\begin{aligned}
&F(x_m)-F(x^{\ast}) \leq \mu R^2_0 2^{-m-1}+ 2\left( 1-2^{-m} \right) \left( \frac{\varepsilon}{2} +\right.\\ &\left. + 2\left\lceil \left( \frac{2^{\nu + 1} \Omega^{\nu + 1}  M_\nu^{2}}{\mu^{\nu + 1}  \varepsilon^{1 - \nu}}\right)\right\rceil^{\frac{p-1}{2p\nu-\nu+1}} \delta_u+\frac{p 2^{\frac{2p\nu+ 2}{2p\nu-\nu+1}} \mu^{\frac{2\nu(p-1)}{2p\nu-\nu+1}}  M_\nu^{\frac{2}{2p\nu-\nu+1}}}{\Omega^{\frac{2\nu(p-1)}{2p\nu-\nu+1}}  \varepsilon^{\frac{1-\nu}{2p\nu-\nu+1}}}\delta_{p}   \right).
\end{aligned}
\end{equation}

For $m=1$, we have 
\begin{equation*}
\begin{aligned}
F(x_1)-F(x^{\ast}) &\overset{\eqref{eq_UIGM_A_B}}{\leq}\frac{d_0 (x^{\ast})}{A_{k}}+ 2 \frac{ \sum \limits_{i=0}^k B_i }{A_k}\delta_u+\frac{(2k+1)\delta_{p}}{A_{k}}+\frac{\varepsilon}{2} \\
&\overset{\eqref{eq_wn}}{\leq} \frac{\Omega \|x_0-x^{\ast}\|^2_E}{2A_{k}}+ 2 \frac{ \sum \limits_{i=0}^k B_i }{A_k}\delta_u+\frac{(2k+1)\delta_{p}}{A_{k}}+\frac{\varepsilon}{2}\\
&\overset{\eqref{eq_R0}}{\leq} \frac{\Omega R_0^2}{2A_{k}}+ 2 \frac{ \sum \limits_{i=0}^k B_i }{A_k}\delta_u+\frac{(2k+1)\delta_{p}}{A_{k}}+\frac{\varepsilon}{2}\\ 
&\overset{\eqref{eq_sumb}}{\leq} \frac{\Omega R_0^2}{2A_{k}}+ 2 \left(\frac{k+2p}{2p}\right)^{p-1} \delta_u+\frac{(2k+1)\delta_{p}}{A_{k}}+\frac{\varepsilon}{2}.
\end{aligned}
\end{equation*}
By the condition on the Step 3 of the algorithm, we have 
\begin{equation}
\label{eq_rest_ineq}
A_{k-1} < \frac{2\Omega}{\mu }\leq A_k,
\end{equation}
and 
\begin{align}
\frac{2\Omega}{\mu}\geq  A_{k-1} \overset{\eqref{eq_suma}}{\geq} \frac{\varepsilon^{\frac{1-\nu}{1+\nu}}}{ M_\nu^{\frac{2}{1+\nu}}}\left(\frac{k-1+2p}{2p}\right)^{\frac{2p\nu-\nu+1}{1+\nu}},\\
\Rightarrow \quad \frac{2^{\frac{1+\nu}{2p\nu-\nu+1}} \Omega^{\frac{1+\nu}{2p\nu-\nu+1}}  M_\nu^{\frac{2}{2p\nu-\nu+1}}}{\mu^{\frac{1+\nu}{2p\nu-\nu+1}}  \varepsilon^{\frac{1-\nu}{2p\nu-\nu+1}}} \geq \left(\frac{k-1+2p}{2p}\right),\\
\Rightarrow \quad \frac{p 2^{\frac{2p\nu + 2}{2p\nu-\nu+1}} \Omega^{\frac{1+\nu}{2p\nu-\nu+1}}  M_\nu^{\frac{2}{2p\nu-\nu+1}}}{\mu^{\frac{1+\nu}{2p\nu-\nu+1}}  \varepsilon^{\frac{1-\nu}{2p\nu-\nu+1}}}+1-2p \geq k,\\
\Rightarrow \quad \frac{p 2^{\frac{2p\nu + 2}{2p\nu-\nu+1}} \Omega^{\frac{1+\nu}{2p\nu-\nu+1}}  M_\nu^{\frac{2}{2p\nu-\nu+1}}}{\mu^{\frac{1+\nu}{2p\nu-\nu+1}}  \varepsilon^{\frac{1-\nu}{2p\nu-\nu+1}}}+1-2p \geq k.\\
\end{align}
Hence,
\begin{align}
\left(\frac{k+2p}{2p}\right)^{p-1}\leq\left\lceil \left( \frac{2^{\nu + 1} \Omega^{\nu + 1}  M_\nu^{2}}{\mu^{\nu + 1}  \varepsilon^{1 - \nu}}\right)\right\rceil^{\frac{p-1}{2p\nu-\nu+1}} \label{eq_2pk},\\
\Rightarrow \quad 2k+1\leq \frac{p 2^{\frac{4p\nu -\nu + 3}{2p\nu-\nu+1}} \Omega^{\frac{1+\nu}{2p\nu-\nu+1}}  M_\nu^{\frac{2}{2p\nu-\nu+1}}}{\mu^{\frac{1+\nu}{2p\nu-\nu+1}}  \varepsilon^{\frac{1-\nu}{2p\nu-\nu+1}}}\label{eq_2k}.
\end{align}

Finally, we have
\begin{equation*}
\begin{aligned}
&F(x_1)-F(x^{\ast})\overset{\eqref{eq_2pk}, \eqref{eq_2k}}{\leq} \frac{\Omega R_0^2}{2A_{k}}+ 2\left\lceil \left( \frac{2^{\nu + 1} \Omega^{\nu + 1}  M_\nu^{2}}{\mu^{\nu + 1}  \varepsilon^{1 - \nu}}\right)\right\rceil^{\frac{p-1}{2p\nu-\nu+1}} \delta_u+\frac{p 2^{\frac{4p\nu -\nu + 3}{2p\nu-\nu+1}} \Omega^{\frac{1+\nu}{2p\nu-\nu+1}}  M_\nu^{\frac{2}{2p\nu-\nu+1}}}{\mu^{\frac{1+\nu}{2p\nu-\nu+1}}  \varepsilon^{\frac{1-\nu}{2p\nu-\nu+1}}A_k}\delta_{p} +\frac{\varepsilon}{2}\\
&\leq \mu R^2_0 2^{-2} + 2\left(1-2^{-1} \right)  \left( \frac{\varepsilon}{2}+ 2\left\lceil \left( \frac{2^{\nu + 1} \Omega^{\nu + 1}  M_\nu^{2}}{\mu^{\nu + 1}  \varepsilon^{1 - \nu}}\right)\right\rceil^{\frac{p-1}{2p\nu-\nu+1}} \delta_u+\frac{p 2^{\frac{2p\nu+ 2}{2p\nu-\nu+1}} \mu^{\frac{2\nu(p-1)}{2p\nu-\nu+1}}  M_\nu^{\frac{2}{2p\nu-\nu+1}}}{\Omega^{\frac{2\nu(p-1)}{2p\nu-\nu+1}}  \varepsilon^{\frac{1-\nu}{2p\nu-\nu+1}}}\delta_{p}  \right). \\
\end{aligned}
\end{equation*}
So \eqref{eq_strongF} is proved for $m=1$. Now we assume that \eqref{eq_strongF} holds for $m$ and prove that it holds for $m+1$.

From \eqref{eq_UIGM_A_B} we get
\begin{equation*}
\begin{aligned}
F(x_{m+1})-F(x^{\ast})&\overset{\eqref{eq_UIGM_A_B}}{\leq}\frac{d_m (x^{\ast})}{A_{k}}+ 2 \frac{ \sum \limits_{i=0}^k B_i }{A_k}\delta_u +\frac{(2k+1)\delta_{p}}{A_{k}} +\frac{\varepsilon}{2}\\
&\overset{\eqref{eq_wn}}{\leq} \frac{\Omega \|x_m-x^{\ast}\|^2_E}{2A_{k}}+ 2 \frac{ \sum \limits_{i=0}^k B_i }{A_k}\delta_u+\frac{(2k+1)\delta_{p}}{A_{k}}+\frac{\varepsilon}{2}\\
&\overset{\eqref{eq_strongX}}{\leq} \frac{\Omega R_m^2}{2A_{k}}+ 2 \frac{ \sum \limits_{i=0}^k B_i }{A_k}\delta_u+\frac{(2k+1)\delta_{p}}{A_{k}}+\frac{\varepsilon}{2} \\
&\overset{\eqref{eq_sumb}}{\leq} \frac{\Omega R_m^2}{2A_{k}}+ 2 \left(\frac{k+2p}{2p}\right)^{p-1} \delta_u+\frac{(2k+1)\delta_{p}}{A_{k}}+\frac{\varepsilon}{2}\\
&\overset{\eqref{eq_2pk},\eqref{eq_2k}}{\leq} \frac{\Omega R_m^2}{2A_{k}} + 2\left\lceil \left( \frac{2^{\nu + 1} \Omega^{\nu + 1}  M_\nu^{2}}{\mu^{\nu + 1}  \varepsilon^{1 - \nu}}\right)\right\rceil^{\frac{p-1}{2p\nu-\nu+1}} \delta_u \\
&+\frac{p 2^{\frac{4p\nu -\nu + 3}{2p\nu-\nu+1}} \Omega^{\frac{1+\nu}{2p\nu-\nu+1}}  M_\nu^{\frac{2}{2p\nu-\nu+1}}}{\mu^{\frac{1+\nu}{2p\nu-\nu+1}}  \varepsilon^{\frac{1-\nu}{2p\nu-\nu+1}}A_k}\delta_{p} +\frac{\varepsilon}{2} \\
&\overset{\eqref{eq_rest_ineq}}{\leq} \frac{\mu R_m^2}{4}  + 2\left\lceil \left( \frac{2^{\nu + 1} \Omega^{\nu + 1}  M_\nu^{2}}{\mu^{\nu + 1}  \varepsilon^{1 - \nu}}\right)\right\rceil^{\frac{p-1}{2p\nu-\nu+1}} \delta_u \\
&+\frac{p 2^{\frac{2p\nu+ 2}{2p\nu-\nu+1}} \mu^{\frac{2\nu(p-1)}{2p\nu-\nu+1}}  M_\nu^{\frac{2}{2p\nu-\nu+1}}}{\Omega^{\frac{2\nu(p-1)}{2p\nu-\nu+1}}  \varepsilon^{\frac{1-\nu}{2p\nu-\nu+1}}}\delta_{p} +\frac{\varepsilon}{2} \\
\end{aligned}
\end{equation*}
\begin{equation*}
\begin{aligned}
&\overset{\eqref{eq_strongX}}{\leq} \mu R^2_0 2^{-m-2}+\frac{4 \mu}{4\mu}\left( 1-2^{-m} \right) \left( \frac{\varepsilon}{2}+ 2\left\lceil \left( \frac{2^{\nu + 1} \Omega^{\nu + 1}  M_\nu^{2}}{\mu^{\nu + 1}  \varepsilon^{1 - \nu}}\right)\right\rceil^{\frac{p-1}{2p\nu-\nu+1}} \delta_u \right. \\ 
&+\left.  \frac{p 2^{\frac{2p\nu+ 2}{2p\nu-\nu+1}} \mu^{\frac{2\nu(p-1)}{2p\nu-\nu+1}}  M_\nu^{\frac{2}{2p\nu-\nu+1}}}{\Omega^{\frac{2\nu(p-1)}{2p\nu-\nu+1}}  \varepsilon^{\frac{1-\nu}{2p\nu-\nu+1}}}\delta_{p}\right) + \frac{\varepsilon}{2}+ 2\left\lceil \left( \frac{2^{\nu + 1} \Omega^{\nu + 1}  M_\nu^{2}}{\mu^{\nu + 1}  \varepsilon^{1 - \nu}}\right)\right\rceil^{\frac{p-1}{2p\nu-\nu+1}} \delta_u  \\ 
&+  \frac{p 2^{\frac{2p\nu+ 2}{2p\nu-\nu+1}} \mu^{\frac{2\nu(p-1)}{2p\nu-\nu+1}}  M_\nu^{\frac{2}{2p\nu-\nu+1}}}{\Omega^{\frac{2\nu(p-1)}{2p\nu-\nu+1}}  \varepsilon^{\frac{1-\nu}{2p\nu-\nu+1}}}\delta_{p} \\
&\leq \mu R^2_0 2^{-m-2}+\left( 2-2^{-m} \right) \left( \frac{\varepsilon}{2}+ 2\left\lceil \left( \frac{2^{\nu + 1} \Omega^{\nu + 1}  M_\nu^{2}}{\mu^{\nu + 1}  \varepsilon^{1 - \nu}}\right)\right\rceil^{\frac{p-1}{2p\nu-\nu+1}} \delta_u \right. \\ 
&+\left.  \frac{p 2^{\frac{2p\nu+ 2}{2p\nu-\nu+1}} \mu^{\frac{2\nu(p-1)}{2p\nu-\nu+1}}  M_\nu^{\frac{2}{2p\nu-\nu+1}}}{\Omega^{\frac{2\nu(p-1)}{2p\nu-\nu+1}}  \varepsilon^{\frac{1-\nu}{2p\nu-\nu+1}}}\delta_{p}\right)\\
&\leq \mu R^2_0 2^{-m-2}+ 2\left(1-2^{-(m+1)} \right)  \left( \frac{\varepsilon}{2}+ 2\left\lceil \left( \frac{2^{\nu + 1} \Omega^{\nu + 1}  M_\nu^{2}}{\mu^{\nu + 1}  \varepsilon^{1 - \nu}}\right)\right\rceil^{\frac{p-1}{2p\nu-\nu+1}} \delta_u \right. \\ 
&+\left.  \frac{p 2^{\frac{2p\nu+ 2}{2p\nu-\nu+1}} \mu^{\frac{2\nu(p-1)}{2p\nu-\nu+1}}  M_\nu^{\frac{2}{2p\nu-\nu+1}}}{\Omega^{\frac{2\nu(p-1)}{2p\nu-\nu+1}}  \varepsilon^{\frac{1-\nu}{2p\nu-\nu+1}}}\delta_{p}\right).
\end{aligned}
\end{equation*}
So we have obtained that \eqref{eq_strongF} holds for m+1 and by induction it holds for all $m \geq 1$

\end{proof}

\begin{corollary}
For getting $(\varepsilon+C_u)$-solution of problem \eqref{eq_pr}, where
\begin{equation*}
C_u=2\left(\frac{\varepsilon}{2} + 2\left\lceil \left( \frac{2^{\nu + 1} \Omega^{\nu + 1}  M_\nu^{2}}{\mu^{\nu + 1}  \varepsilon^{1 - \nu}}\right)\right\rceil^{\frac{p-1}{2p\nu-\nu+1}} \delta_u +  \frac{p 2^{\frac{2p\nu+ 2}{2p\nu-\nu+1}} \mu^{\frac{2\nu(p-1)}{2p\nu-\nu+1}}  M_\nu^{\frac{2}{2p\nu-\nu+1}}}{\Omega^{\frac{2\nu(p-1)}{2p\nu-\nu+1}}  \varepsilon^{\frac{1-\nu}{2p\nu-\nu+1}}}\delta_{p} \right),
\end{equation*}
we need
\begin{equation} 
\label{eq_strongN}
\tilde{l}=\left\lceil \log \left( \frac{\mu R_0^2}{2\varepsilon} \right) \right\rceil
\end{equation}
restarts and
\begin{equation}
\tilde{k}\leq\inf \limits_{0\leq\nu\leq 1}\left(\frac{ \Omega^{1+\nu} 2^{4p\nu-\nu+3} M_\nu^{2}}{\mu^{1+\nu}\varepsilon^{1-\nu}}\right)^{\frac{1}{2p\nu-\nu+1}}+1
\end{equation}
iterations of UIGM per iteration.
The total number of UIGM iterations is not more than
\[N=\left(\inf \limits_{0\leq\nu\leq 1}\left(\frac{\Omega^{1+\nu} 2^{4p\nu-\nu+3} M_\nu^{2}}{\mu^{1+\nu}\varepsilon^{1-\nu}}\right)^{\frac{1}{2p\nu-\nu+1}}+1\right)\cdot \left\lceil \log \left( \frac{\mu R_0^2}{2\varepsilon} \right) \right\rceil.
\]
\end{corollary}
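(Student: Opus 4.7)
The plan is to combine two ingredients that are already available: the convergence bound \eqref{eq_strongF} established in the preceding theorem (which controls the error after $m$ restarts in terms of $\mu R_0^2 2^{-m-1}$ plus the oracle-dependent term $C_u/2$), and the lower bound \eqref{eq_suma} on the accumulated coefficient $A_k$ produced by the power-policy UIGM (which controls how many inner UIGM iterations are needed before the restart test $2\Omega \le \mu A_k$ triggers). No new idea beyond careful bookkeeping is required.

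First, I would determine the number of outer restarts. By \eqref{eq_strongF}, after $m$ restarts the optimality gap is at most $\mu R_0^2\, 2^{-m-1} + C_u$. To make the first summand not exceed $\varepsilon$, it suffices to take $m$ with $2^{m+1} \ge \mu R_0^2/\varepsilon$, that is $m \ge \log(\mu R_0^2/(2\varepsilon))$. Taking the ceiling yields the claimed $\tilde l = \lceil \log(\mu R_0^2/(2\varepsilon)) \rceil$ and guarantees the $(\varepsilon+C_u)$-accuracy asserted.

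Second, I would bound the number of inner iterations $\tilde k$ performed in a single restart. The while-loop terminates as soon as $A_k \ge 2\Omega/\mu$. From \eqref{eq_suma} we have
\begin{equation*}
A_k \ge \frac{\varepsilon^{\frac{1-\nu}{1+\nu}}}{M_\nu^{\frac{2}{1+\nu}}}\left(\frac{k}{4}\right)^{\frac{2p\nu-\nu+1}{1+\nu}},
\end{equation*}
so the termination condition certainly holds once $k$ is large enough that the right-hand side exceeds $2\Omega/\mu$. Solving this inequality for $k$, raising both sides to the power $(1+\nu)/(2p\nu-\nu+1)$, and regrouping the constants $2$, $4$ (writing $4 = 2^{2(2p\nu-\nu+1)/(2p\nu-\nu+1)}$), produces exactly the combined exponent $4p\nu-\nu+3$ on $2$ claimed in the statement, giving
\begin{equation*}
\tilde k \le \left(\frac{\Omega^{1+\nu}\, 2^{4p\nu-\nu+3}\, M_\nu^{2}}{\mu^{1+\nu}\varepsilon^{1-\nu}}\right)^{\frac{1}{2p\nu-\nu+1}}+1.
\end{equation*}
Since UIGM does not depend on $\nu$, I may take the infimum over $\nu\in[0,1]$ on the right. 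Multiplying $\tilde l$ by $\tilde k$ gives the total bound $N$. The only delicate point is matching the constants in the exponent of $2$, which is a direct algebraic check of the identity $2\cdot (2p\nu-\nu+1) + (1+\nu) = 4p\nu-\nu+3$; everything else is immediate from the two results cited.
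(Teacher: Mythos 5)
Your proposal is correct and follows essentially the same route as the paper: apply \eqref{eq_strongF} with $m=\tilde l$ to kill the $\mu R_0^2 2^{-m-1}$ term, and combine the restart stopping rule $A_k \ge 2\Omega/\mu$ with the lower bound \eqref{eq_suma} to bound $\tilde k$, with the same exponent bookkeeping $2^{1+\nu}\cdot 4^{\,2p\nu-\nu+1}=2^{4p\nu-\nu+3}$. (Only a cosmetic slip: the second term in \eqref{eq_strongF} is $C_u$, not $C_u/2$; your subsequent use of it is correct.)
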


\begin{proof}

\begin{equation*}
\begin{aligned}
&F(x_{\tilde{l}})-F(x^{\ast}) \overset{\eqref{eq_strongF}}{\leq} \mu R^2_0 2^{-\tilde{l}-1}+C_u\overset{\eqref{eq_strongN}}{\leq} 
\varepsilon+C_u.
\end{aligned}
\end{equation*}

We now estimate the total number of UIGM iterations, which is sufficient to obtain $(\varepsilon+C_u)$-solution.
First, we estimate the number $\tilde{k}$ of UIGM iterations at each restart. By the stopping condition for the restart, we have
\begin{equation*}
\begin{aligned}
A_{\tilde{k}} \geq \frac{2 \Omega}{\mu} \overset{\eqref{eq_suma}}{>} A_{\tilde{k}-1 }\geq \frac{\varepsilon^{\frac{1-\nu}{1+\nu}}}{ M_\nu^{\frac{2}{1+\nu}}}\left(\frac{\tilde{k}-1}{4}\right)^{\frac{2p\nu-\nu+1}{1+\nu}},\\
\Rightarrow \quad \frac{2 \Omega \, 2^{\frac{4p\nu-2\nu+2}{1+\nu}} M_\nu^{\frac{2}{1+\nu}}}{\mu\,\varepsilon^{\frac{1-\nu}{1+\nu}}}\geq \left(\tilde{k}-1\right)^{\frac{2p\nu-\nu+1}{1+\nu}},\\
\Rightarrow \quad \left(\frac{\Omega^{1+\nu} 2^{4p\nu-\nu+3} M_\nu^{2}}{\mu^{1+\nu}\varepsilon^{1-\nu}}\right)^{\frac{1}{2p\nu-\nu+1}}\geq \tilde{k}-1.    
\end{aligned}
\end{equation*}
Since the algorithm does not use any particular choice of $\nu$, we have
\begin{equation*}
\tilde{k}\leq\inf \limits_{0\leq\nu\leq 1}\left(\frac{\Omega^{1+\nu} 2^{4p\nu-\nu+3} M_\nu^{2}}{\mu^{1+\nu}\varepsilon^{1-\nu}}\right)^{\frac{1}{2p\nu-\nu+1}}+1.
\end{equation*}

Then the total number of UIGM is not more than $N=\tilde{k} \cdot \tilde{l}$, and we have
\[N=\left(\inf \limits_{0\leq\nu\leq 1}\left(\frac{\Omega^{1+\nu} 2^{4p\nu-\nu+3} M_\nu^{2}}{\mu^{1+\nu}\varepsilon^{1-\nu}}\right)^{\frac{1}{2p\nu-\nu+1}}+1\right)\cdot \left\lceil \log \left( \frac{\mu R_0^2}{2\varepsilon} \right) \right\rceil.
\]
\end{proof}

Now we compare our result with existing methods in the same manner as for convex functions.  $
    С_u=  C\delta_p + D\delta_u    + \frac{\varepsilon}{2}.$ Here $C$ is a collecting prox-mapping error for given $\varepsilon,\mu$ and $D$ is a collecting oracle error for desired $\varepsilon,\mu$. As a result we get next table.

\begin{center}
\begin{tabular}{ | c | c | c | c |}
\hline
$(\nu, p)$ & $N$ & $C$ & $D$ \\ \hline
$(0, 1)$ & 
$O\left(\frac{\Omega M_0^{2}}{\mu\varepsilon}\cdot \log \left( \frac{\mu R_0^2}{2\varepsilon} \right) \right)$
& $O\left(\frac{M_0^2}{ \varepsilon}  \right)$
& $O   \left( 1\right)  $
 \\ \hline

$(1, 1)$ & 
$O\left(\frac{\Omega M_1}{\mu}\cdot \log \left( \frac{\mu R_0^2}{2\varepsilon}  \right) \right)$
& $O\left(M_1\right)$
& $O   \left( 1\right)  $ \\ \hline

$(1, 2)$ & 
$O\left(\left(\frac{\Omega M_1}{\mu}\right)^{\frac{1}{2}}\cdot \log \left( \frac{\mu R_0^2}{2\varepsilon}  \right) \right)$
& $O\left(\frac{ \mu  M_1}{\Omega}  \right)^{\frac{1}{2}}$
& $O   \left( \frac{ \Omega  M_1}{\mu }\right)^{\frac{1}{2}}  $ \\ \hline

\end{tabular}
\end{center}

 For non-smooth functions $(\nu=0)$, the convergence rate of Restart UIGM for any $p\in[1,2]$ agrees with rate of convergence of subgradient methods. This methods are robust for oracle error, but collect prox-mapping error.
For smooth functions $(\nu=1)$ and $p=1$ Restart UIGM has  
the same convergence rate as a dual gradient method. This method is robust both for oracle error and prox-mapping error.
And for $p=2$ Restart UIGM has the same rate as a fast gradient method. This method collects oracle error but kill prox-mapping error. This table shows three main regimes for Restart UIGM and how it corresponds with classical methods.

\section{Switching policy}
In this section we describe another variant of coefficient policy. The key observation is that Fast gradient method(FGM) accumulates the error, but converges faster and Dual gradient method(DGM) doesn't  accumulate the error, but works slower. That's why at the begging we make some steps of FGM until the error reaches some limit and then make only DGM steps. This policy was introduced in \cite{devolder2013intermediate}. Now we should understand, what is the limit. If we want to get the total error equal to $\varepsilon$, then the the error from inexactness should be $\varepsilon/2$. Now we describe this idea in more details.

Let the switching policy is
\begin{equation}
\label{switch_a}
    \alpha_k= \begin{cases} \frac{k+4}{4}\cdot\frac{1}{L_k} &k = 0, \ldots s \quad\text{--- FGM steps}\\
    c_k\cdot\frac{1}{L_k} &k = s+1, \ldots N  \quad \text{--- DGM steps}
    \end{cases},
\end{equation}
where $s$ is the moment of switching and $c_k$ is some constant, we will describe later how to choose them.

Firstly, we should prove, that the switching policy can be used in UIGM. Let check the correctness of inequalities \eqref{eq_ABineq} for the switching policy. For FGM steps it is easily follow from \eqref{power_ok}, because it is the power policy for $p=2$. For DGM steps we need to prove that
\begin{align*}
    0 < c_k \cdot \frac{1}{L_k} \leq c_k^2  \cdot \frac{1}{L_k} \leq A_{k-1} + c_k  \cdot \frac{1}{L_k}
\end{align*}
First two inequalities are satisfied if $c_k\geq 1$. So we get the first condition for $c_k$. The second condition comes from the last inequality because we need to get $c_k$ such that $c_k^2-c_k-A_{k-1}L_k \geq 0$. Hence
\begin{equation}
\label{switch_c1}
   1 \leq c_k \leq \frac{1+\sqrt{1+A_{k-1}L_k}}{2}
\end{equation}
So if these two conditions for $c_k$ are satisfied we prove that the switching policy can be used in UIGM.

Secondly, we should prove convergence of the switching policy. For that we need to satisfy two inequalities from \eqref{eq_UIGM_A_B}
\begin{align}
\label{switch_inex_err}
\frac{2\delta_u}{A_K}\sum\limits_{j=0}^{k} B_j \leq \frac{\varepsilon}{6}\\
\label{switch_prox_err}
    \frac{(2k+1)\delta_p}{A_k}\leq \frac{\varepsilon}{6}.
\end{align}
Note that from this two inequalities we get three main regimes:
\begin{itemize}
    \item Only FGM steps. In this case, $\delta_u << \varepsilon$ and $\delta_p << \varepsilon$, and both inequalities \eqref{switch_inex_err} and \eqref{switch_prox_err} never fail.
    \item Only DGM steps. In this case, $\delta_u$ is rather big and \eqref{switch_inex_err} fails on the first step, so we try our best and do only slow DGM steps.
    \item Switching on the moment $s$. In this case, we do some FGM steps until the moment $s$, when \eqref{switch_inex_err} fails at the first time and next do only DGM steps.
\end{itemize}
First two regimes are easy for understanding but for the last one we write more details.
Note that for FGM steps \eqref{switch_prox_err} always true because the left side decreases on each step. Note that from some moment $\sum\limits_{j=0}^k B_j/\sum\limits_{j=0}^k \alpha_j$ starts to increase and at the moment $s$ it reaches the limit $\frac{\varepsilon}{12 \delta_u}$. 
Now we need to check, that for DGM steps \eqref{switch_inex_err} will not fail.
\begin{align*}
    \sum\limits_{j=0}^{k} B_j&\leq \frac{\varepsilon}{12 \delta_u}\sum\limits_{j=0}^{k} \alpha_j\\
    \sum\limits_{j=0}^{k-1} B_j +B_{k}&\leq \frac{\varepsilon}{12 \delta_u}\left(\sum\limits_{j=0}^{k-1} \alpha_j + \alpha_{k}\right)
\end{align*}
We assume that on previous step \eqref{switch_inex_err} was correct, that's why we need
\begin{align*}
    B_{k}\leq \frac{\varepsilon}{12 \delta_u} \alpha_{k}\\
    \frac{c_{k}^2}{L_k}\overset{\eqref{switch_a}}{\leq} \frac{\varepsilon}{12 \delta_u} \frac{c_{k}}{L_k}\\
\end{align*}
So we get the third condition on $c_k$
\begin{equation}
\label{switch_c2}
    c_k\leq \frac{\varepsilon}{12 \delta_u}
\end{equation}
Hence when when we merge all conditions \eqref{switch_c1} and \eqref{switch_c2}, we get
\begin{equation}
\label{ck}
    c_k = \min \left(\frac{\varepsilon}{12 \delta_u}, \frac{1+\sqrt{1+A_{k-1}L_k}}{2} \right)
\end{equation}

As a result, we've proved that the switching policy can be used with UIGM. We've proved that UIGM converges, when we do FGM steps until at the moment $s$ \eqref{switch_inex_err} fails, then switch DGM with $c_k$ defined by \eqref{ck}. Note, that now our method needs to know only $\varepsilon, \delta_u, \delta_p$ and doesn't need $p$ as in the power policy, hence it converges as well or better than any $p$ for power policy.
\begin{theorem}
Assume that $f$ is a convex function with inexact first-order oracle. Then, for the sequence \eqref{switch_a}, the moment $s$ is first time when \eqref{switch_inex_err} fails and $c_k$ defined by \eqref{ck}, for all $k \geq 0$, 
\begin{equation}
\label{switch_fek}
    F(y_k)-F^{\ast} \leq \inf\limits_{p\in[1,2]}O\left[\inf\limits_{\nu\in[0,1]} \left(\frac{M_\nu^{\frac{2}{1+\nu}}d(x^{\ast})}{\varepsilon^{\frac{1-\nu}{1+\nu}}k^{\frac{2p\nu-\nu+1}{1+\nu}}} +\frac{M_\nu^{\frac{2}{1+\nu}}}{\varepsilon^{\frac{1-\nu}{1+\nu}}k^{\frac{2\nu(p-1)}{1+\nu}}} \delta_{p}\right)+ k^{p-1}\delta_u  \right] + \frac{\varepsilon}{2}.
\end{equation}
\end{theorem}

The same argumentation is correct also for strongly convex functions. So now we get fully adaptive and universal coefficient policy and method.

\section{Numerical illustration}
For  numerical illustration we choose a Poisson likelihood problems and as an application Positron Emission Tomography(PET). It plays an important role in medicine for detecting cancer and metabolic changes in human organ. PET can be treated as a Poisson likelihood model \cite{ben2001ordered},\cite{he2016fast}. The estimation of radioactivity density within an organ corresponds to the following convex  non-smooth optimization problem:
\begin{equation*}
\min\limits_{x\in \Delta_n} \sum_{i=1}^{m} \left[ \left[Ax \right]_i -w_i \log(\left[Ax \right]_i)  \right]
\end{equation*}
where $\Delta_n$ is a standard simplex. 
$A$  is a data and refers to the likelihood matrix known from geometry of detector, and $w$  is a data and refers to the vector of detected events, such that $w_i=[Ax]_i+b_i$, where $b_i$ is Poisson noise for any $1\leq i \leq m$. So we get a regression and our goal is to find $x$ from data.  For simplicity, we will not consider any penalty term for this application.Note that actually this problem has unbounded $M_\nu$ because $\nabla \log y = 1/y$ is unbounded in $y=0$. So here we assume that all points of our method are separated from zero and then $M_\nu$ is bounded by some constant.

 We assume, that tomographic scanner can have some small random and systematic errors, so we get inexact data and hence inexact function and gradient. So we get inexact oracle. If method converges with inexact data it means that we have robust system and even with errors we will get rather precise tomography.
 
 In this case, the entropy function $d(x)= \sum_{i=1}^n x_i \log(x_i)$ is a good choice for the simplex domain. Moreover, the prox-mapping can be computed by direct formula \cite{devolder2013intermediate}, which means that we have $\delta_p=$. If we choose another $d(x)$ it may be worse, because for the finding of the prox-mapping we need to solve additional optimization subproblem. For example we can approximately solve it by FGM with $\delta_p>0$, because this subproblem is strongly convex and that's why FGM converges fast.
 
 Code is written in Python 3. We conduct experiments using Ubuntu 14, machine: Intel Core i7-4510U CPU 2.00GHz 2.60GHz, 8Gb RAM. Matrix $A \in \mathbb{R}^{100 \times 200}$  and $w \in \mathbb{R}^{100}$ are generated unifomly randomly. For simplicity, we calculate inexact oracle as exact oracle plus the noise $\delta_u$. Desired accuracy is $\varepsilon=0.0001$

For small inexactness $\delta=0.001 \varepsilon$ UIGM give us next graphic.
\begin{figure}[H]
\label{im_delta0}
\begin{center}
\includegraphics[width=0.7\linewidth]{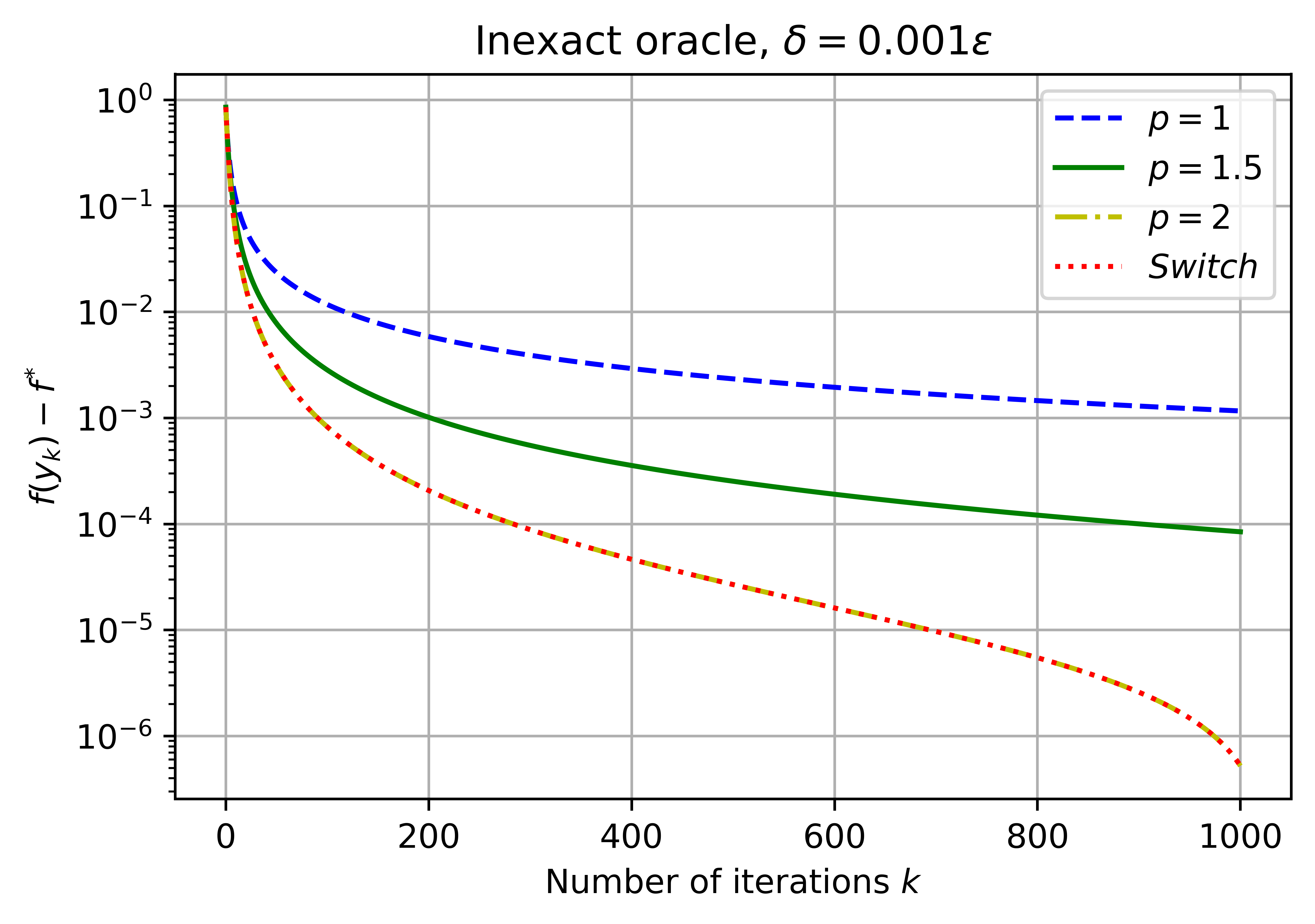}
\end{center}
\caption{Comparison of different power policies and switch policy for small inexactness}
\end{figure}
From this graphic we can see that, power policy with $p=2$ and switching policy are the fastest. For small inexactness all variants don't collect any noticeable error.

In next graphic, we can see, that for medium error $\delta=\varepsilon$ switching policy starts to work as power policy with $p=1$, because all our estimates of error collection come from theory but the real error in specific point can be less than theoretical estimate. Unfortunately we can't measure the real error.
\begin{figure}[H]
\label{im_eps10}
\begin{center}
\includegraphics[width=0.7\linewidth]{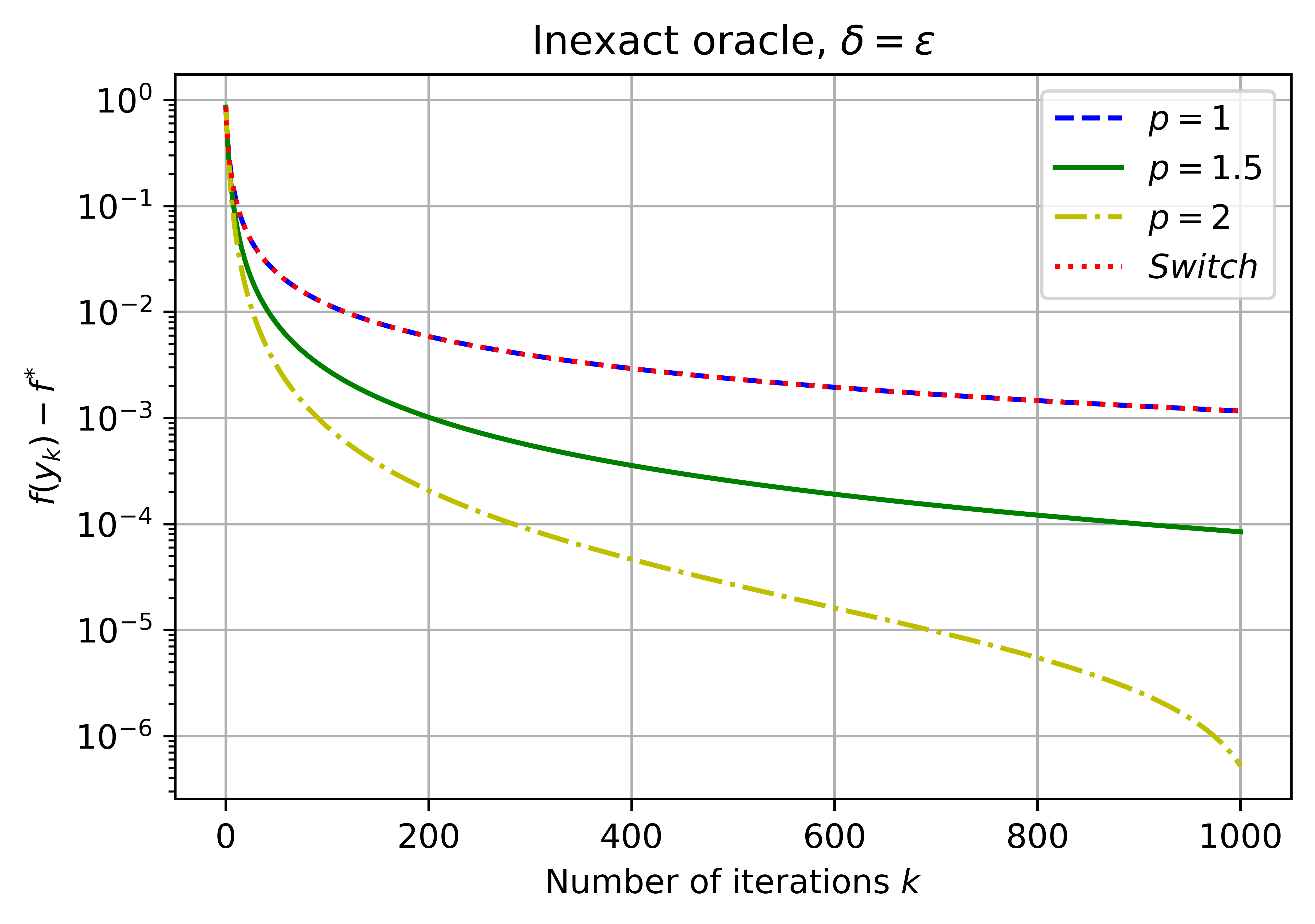}
\end{center}
\caption{Comparison of different power policies and switch policy for medium inexactness}
\end{figure}

For big error $\delta=1000\varepsilon$ the power policy with $p= 2$ collects error and works worse than the power policy $p=1.5$. So the method with intermediate rate is the best one, because it is rather fast and also robust. 
\begin{figure}[H]
\label{im_eps100}
\begin{center}
\includegraphics[width=0.7\linewidth]{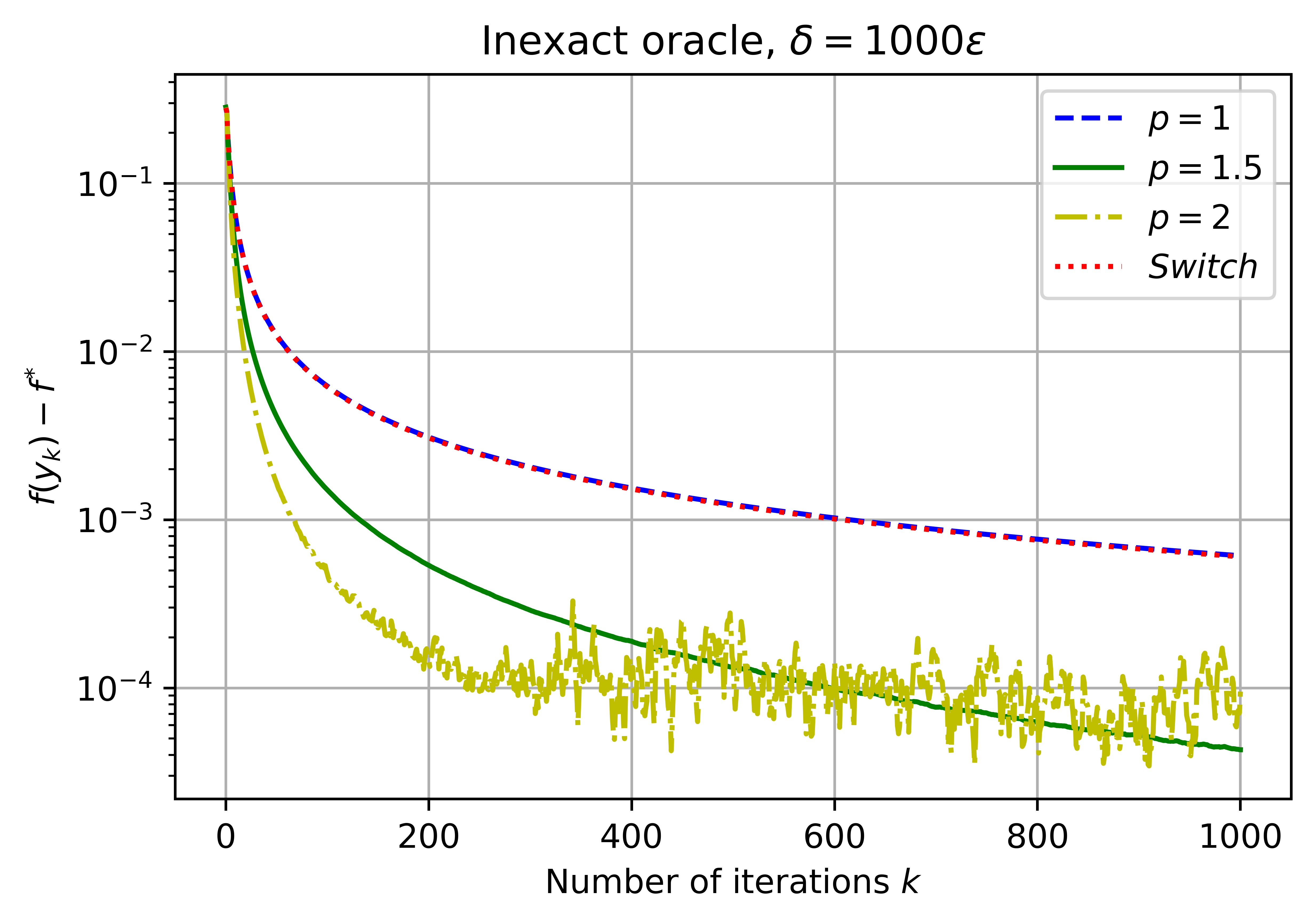}
\end{center}
\caption{Comparison of different power policies and switch policy for big inexactness}
\end{figure}
As a result, we get that UIGM for some intermediate $p$ can be better, than classical methods. Also we get that our method get a speed up for non-smooth problem in comparison with optimal DGM ($p=1$). Unfortunately in practice switching policy may be worse, than power policy because of uncertainty of real error.   

\section{Conclusion}

In this paper, we present new Universal Intermediate Gradient Method for convex optimization problem with inexact H\"older-continuous subgradient.  Our method enjoys both the universality with respect to smoothness of the problem and interpolates between Universal Gradient Method and Universal Fast Gradient Method, thus, allowing to balance the rate of convergence of the method and rate of the error accumulation. Under additional assumption of strong convexity of the objective, we show how the restart technique can be used to obtain an algorithm with faster rate of convergence.

We note that Theorem \ref{ufgm0} is primal-dual friendly. This means that, if UIGM is used to solve a problem, which is dual to a problem with linear constraints, it generates also a sequence of primal iterates and the rate for the primal-dual gap and linear constraints infeasibility is the same. This can be proved in the same way as in Theorem 2 of \cite{dvurechensky2017adaptive}. 
Also, based on the ideas from \cite{fercoq2016restarting,fercoq2017adaptive,roulet2017sharpness}, UIGM for the strongly convex case can be modified to work without exact knowledge of strong convexity parameter $\mu$.
Finally, similarly to \cite{gasnikov2015gradient,gasnikov2016universal,dvurechensky2016stochastic}, UIGM can be modified to solve convex problems with stochastic inexact oracle.

\textbf{Acknowledgements.}
This research was funded by Russian Science Foundation (project 17-11-01027).

\bibliographystyle{plain} 
\bibliography{biblioUIGM}

\end{document}